\newtheorem{propo}{\bf Proposition}[section]
\newtheorem{rem}{\bf Remark}[section]
\def\BP{B\&P\xspace}
\def\GCPP{GCCP\xspace}
\def\GCSC{MGCSC\xspace}
\newcommand{\msout}[1]{\text{\sout{\ensuremath{#1}}}}
\newcommand{\rv}[1]{\relax\ifmmode\azul{#1}\else\azul{\textbf{#1}}\fi}
\newcommand{\rvv}[2]{\rojo{\ifmmode\msout{#1}\else\sout{#1}\fi} \azul{\ifmmode#2\else\textbf{#2}\fi}}
\definecolor{armygreen}{rgb}{0.19, 0.53, 0.43}
\definecolor{armygreen2}{rgb}{0.99, 0.53, 0.93}
\definecolor{armygreen3}{rgb}{0.93, 0.53, 0.19}
\newcommand{\JPA}[2]{{\color{armygreen}#1\ifmmode\msout{#2}\else\sout{#2}\fi}}
\newcommand{\AMR}[2]{{\color{armygreen2}#1 \ifmmode\msout{#2}\else\sout{#2}\fi}}
\begin{document}

\title{ \vspace*{-.3cm}
A Branch-and-price procedure for clustering data that are graph connected}

\author[(1)]{Stefano Benati}
\author[(2)]{Diego Ponce}
\author[(2)]{Justo Puerto}
\author[(3)]{Antonio M. Rodr\'iguez-Ch\'ia}

\affil[(1)]{\small{Dipartimento di Sociologia e Ricerca Sociale, Universit\`{a} di Trento, Italy, stefano.benati@unitn.it}}

\affil[(2)]{\small{IMUS and Departamento de Estad\'istica e Investigaci\'on Operativa, Universidad de Sevilla, Spain, \{dponce,puerto\}@us.es}}


\affil[(3)]{\small{Departamento de Estad\'istica e Investigaci\'on Operativa, Universidad de C\'adiz, Spain, antonio.rodriguezchia@uca.es}}


\date{}

\maketitle
\thispagestyle{empty}

\vspace*{-1.5cm}
\begin{abstract}
This paper studies the Graph-Connected Clique-Partitioning Problem (\GCPP), a clustering optimization model in which  units are characterized by both individual and relational data. This problem, introduced by \cite{BPR17} under the name of Connected Partitioning Problem,  shows
that the combination of the two data types improves the clustering
quality in comparison with other methodologies. Nevertheless, the resulting optimization problem is difficult to solve; only small-sized instances can be solved exactly, large-sized instances require the application of heuristic algorithms.
In this paper we improve the exact and the heuristic algorithms previously proposed. Here, we provide a new Integer Linear Programming (ILP) formulation, that  solves larger instances, but at the cost of using an exponential number of variables. In order to limit the number of variables necessary to calculate the optimum, the new ILP formulation is solved implementing a branch-and-price (\BP) algorithm. The resulting pricing problem is itself a new combinatorial model: the Maximum-weighted Graph-Connected Single-Clique problem (\GCSC), that we solve testing various Mixed Integer Linear Programming (MILP) formulations and proposing a new fast ``random shrink'' heuristic.
In this way, we are able to improve the previous algorithms: The \BP
method outperforms the computational times of the previous MILP algorithms and the new random shrink heuristic, when applied to \GCPP, is both faster and more accurate than the previous heuristic methods. Moreover, the combination of column generation and random shrink is itself a new MILP-relaxed matheuristic that can be applied to large instances too. Its main advantage is that all heuristic local optima are combined together in a restricted MILP, consisting in the application of the exact B\&P method but solving heuristically the pricing problem.\end{abstract}

\noindent \textit{Keywords: Combinatorial optimization, Clustering, Mixed
integer programming, Branch-and-price.}
\textheight 23cm%

\section{Introduction}\label{sec:1}
We consider a clustering problem in which units are characterized by both individual and relational data. Individual data take the form of a matrix F of $n$ rows, representing units, and $m$ columns, representing features that are measured for individuals. Individual data are then complemented by relational data, for example representing friendship, communication, co-participation and so on. Relational data are described as an undirected graph $G = (V, E)$ in which $V$ are the units, $| V | = n$, and there is an edge $e_{ij} \in E$ if and only if there is a relation between $i,j \in V$.  The data structure that combines the graph $G = (V, E)$ with the data matrix $F$ forms the triplet $G = (V, E, F)$, called {\it attributed graph}, see \cite{BCMM-2015}.

The simplest method of clustering attributed graphs is projecting the relational data into the individual data, or vice versa, the individual into the relational. In the former case, a dissimilarity measure between units $i$ and $j$ is calculated using {\it both} individual measures of F {\it and} the existence/non-existence of an arc $i,j$, \cite{CL-2012, Z-2012}. In the latter case, the matrix F is used to calculate a distance $d_{ij}$ attached to an existing arc $e_{ij}$, and to convert the {\it unweighted} graph into a {\it weighted} one, \cite{NAD-2003}. In both cases, the problem is reduced to standard clustering or graph partitioning problems respectively, and any solution methods for those problems can be applied. The interested reader is referred to \cite{GAMBELLA2021807} for a recent survey.
Alternatively, the two data structures are kept separate and then one can formulate an optimization model to determine the best classification. The optimization model must be formulated in such a way that it takes into account that relational data give additional information about the similarity between units. That is, the objective function or the constraints set must reflect some connectivity requirement. In \cite{BPR17}, clustering with graph-connected units is modeled as a combinatorial problem in which the most similar groups are evaluated through the clique partition of individual data, namely, induced by the information in F but with the additional constraint that those cliques  must be additionally connected through the underlying graph $G$, representing the relational data. The problem so formulated has been called the Graph-Connected Clique-Partitioning Problem (\GCPP). \cite{BPR17} shows that this model is superior than classical clustering methods in finding true clusters.

More formally, the \GCPP consists in the following. An attributed graph $G = (V, E, F)$ is given, so that the similarity/dissimilarity distances $c_{ij}$ between all pairs $i,j$ can be calculated using only the information contained in F, see \cite{BPR17} for further details on its computation. These $c_{ij}$
are used to formulate the objective function of a clique partitioning problem as done in \cite{GW-1989}. Relational data $E$ are used imposing that the optimal clique partition $\Pi = \{V_1, \ldots, V_p\},\, 1\le p\le n,$ must be composed of components $V_{k} \subseteq V, \, k=1,\dots, p,$ {\it connected} trough the arcs of $E$. Some empirical experiments have shown that combining the two data sources through this model improves the clustering quality.

In \cite{BPR17}, exact and heuristic methods are proposed to solve the \GCPP. Exact solutions are calculated though different MILP models. Those models differ on how they impose connectivity through a set of linear constraints. Connectivity can be imposed through flow conservation laws, or using constraints describing the forest/tree decomposition, and models can be strengthened with valid inequalities. Some formulations are more advantageous than others, but, in all cases, only problems of moderate size can be solved exactly. Various implementations of local search heuristics are tested as well, but, even though those methods find reasonably accurate solutions, their computational times are high. Therefore, it is worth exploring the possibility of  improving on these previous findings.

In this paper we are proposing three new methods, one exact procedure and two heuristics, to solve the \GCPP. The exact method is based on a branch-and-price algorithm (\BP), a technique that has been proved successful when applied to other clustering problems, \cite{Mehrotra1998, Aloise2010}. The interested reader can also see \cite{LubbeckeMarcoE2005STiC}, \cite{GualandiM13}, and the references therein to gain further insight into column generation techniques.

The first step of the algorithm is to formulate \GCPP as a Set Partitioning (SP) problem. In the SP, a binary variable $y_S$ is defined for every feasible subsets $S\subseteq V$, and then a set of linear constraints defines the feasible solutions. Obviously, the straight solution of the model is impeded by the exponential number of variables, $O(2^n)$, but actually there is no need to consider them all just from the beginning. Rather, one can start with a MILP formulation including only a few of the $y_S$'s, solve the problem, and then adding new variables only after the result of the reduced cost test. The reduced cost test relies on the exact or heuristic solution of a new combinatorial problem, the Maximum-weighted Graph-Connected Single-Clique problem (\GCSC). We formulate the \GCSC as a MILP model, testing the effectiveness of various formulations. Moreover, as it is important to find a solution quickly, thus a fast, greedy-like constructive heuristic has been developed, inspired by the noising method proposed in \cite{Charon2006}. As a result, it has been found that this heuristic can be applied to the \GCPP as well, providing a faster and more accurate algorithm than local search heuristics.
In addition, a MILP-relaxed matheuristic procedure is developed that combines
the quickness of previously described heuristic  with the accuracy of the column generation developed for the exact method. We refer the reader to \cite{Raidl2015} and the references therein for alternative successful combinations of column generation and heuristics. Finally, we found that our implementation of \BP, the heuristic and matheuristic approaches developed in this paper are respectively improvements of the previous exact and heuristics solution procedures as they calculate faster their respectively optimal or approximate solutions.

The paper is structured in \ref{sec:7} sections, the first being this introduction. In Section \ref{sec:2}, we provide a formal definition of the problem and its formulation as a SP with an exponential number of variables. In Section \ref{sec:pricing}, we discuss the pricing problem consisting of a new combinatorial problem, the \GCSC, so we discuss how to calculate its optimal solution. In Section \ref{sec:4}, we describe a fast heuristic for an approximate solution of both \GCPP and \GCSC, based on greedy, but enhanced through the use of some random steps. Also a MILP-relaxed matheuristic, capable of handling very large instances with good accuracy, is proposed. Section \ref{sec:5}  is devoted to describing some details of the \BP which are not included in Section \ref{sec:2}  for the ease of compactness. In Section \ref{s:comp-test}, we report our computational analysis, comparing the exact methods to solve the \GCPP by \BP through different formulations of the pricing
problem and testing the performance of the heuristics too. The paper ends with some remarks on future research directions.


\section{Problem definition and set partitioning formulation\label{sec:2}}

In this section, we formally define the \GCPP.
Let $V = \{1, \ldots, n\}$ be a set of units and $C=(c_{ij})_{i,j\in V}$ a measure of similarity/dissimilarity between units, with $c_{ij} < 0$ denoting similarity, dissimilarity otherwise.
Assume that units of $V$ are embedded in a graph $G=(V,E)$, whose edges $e_{ij} \in E$ describe links between $i,j \in V$. Given $Q \subseteq V$, let  $G[Q] = (Q, E[Q])$ be the subgraph induced by $Q$, i.e., the graph with edges  $e_{ij} \in E[Q]$ iff  $i,j \in Q$ and $e_{ij} \in E$. We say that $Q \subseteq V$ is connected if $G[Q] = (Q, E[Q])$, i.e., the subgraph induced by $Q$, is a connected subgraph.

The  goal of \GCPP is to find a partition $\Pi = \{V_1, \ldots, V_p\}$ of $V$ (with parameter $p$ not fixed in advance, i.e., $1\le p \le n$), such that any $V_{k},  k=1,\ldots,p,$ is connected and minimizing the objective function:
\begin{equation*}
f(\Pi) = \sum_{k = 1}^p \sum_{i,j \in V_k} c_{ij}.
\end{equation*}

Hence, \GCPP can be formulated as follows:
\begin{eqnarray*}
& \displaystyle \min_{\Pi \in {\cal P}}  & \; f(\Pi)  \\
                                   & \mbox{ s.t. } & V_{k} \mbox{ is connected for all $V_{k} \in \Pi$,}
\end{eqnarray*}
where ${\cal P}$ is the set of all the partitions of $V$.

As \GCPP is in minimization form, units $i$ and $j$ for which $c_{ij}$ is negative will tend to be in the same group, while units for which $c_{ij}$ is positive will tend to be in different groups. Introducing a connection constraint between units implies that even though a unit can be similar to several others, it can be clustered only to the connected units.

In \cite{BPR17}, \GCPP has been formulated and solved with exact and heuristic methods. Exact methods are some MILP formulations based on the Clique Partition problem with connection constraints. Heuristic methods are the improved local search heuristics Variable Neighborhood Search (VNS) and Random Restart (RR). In this paper, we introduce a new MILP formulation with an exponential number of variables that will be solved through column generation, embedded in a branch-and-price algorithm. Next we introduce two new heuristic procedures: A constructive heuristic based on random shrink and a MILP-relaxed matheuristic based on approximated column generation.
All new methods are improvements over the old ones, as the exact method improves the computational times and the maximum size of the solved instances, while the heuristics improve the optimum approximation for a given computational time.

\subsection{The Set Partitioning formulation\label{ss:setpartitioning}}

In this section, a new formulation of \GCPP is introduced, in which an exponential number of variables are needed.  Suppose that we can list all connected subsets $S$ of $V$: Let ${\cal S} = \{S \: | \: S \subseteq V, G[S] \mbox{ is connected}\}$  and let $c_S=\sum_{i\in S} \sum_{j\in S: j>i} c_{ij}$. Let $y_S$ be a binary variable defined for all $S \in {\cal S}$ such that:
$$y_S= \begin{cases}
1, & \mbox{if } S \in {\Pi},  \\
0, & \mbox{otherwise.}
\end{cases}$$
Hence, \GCPP can be formulated as follows:
\begin{eqnarray*}
\mathbf{(MP)}  & \min  & \displaystyle\sum_{ S \in {\cal S}} c_S y_S \\
&s.t.& \displaystyle \sum_{S \in {\cal S}\, : \, i \in S} y_S  =  1, \quad \forall i \in V,\\
& &y_S\in\{0,1\}, \quad \forall\,S \in \cal{S}.
\end{eqnarray*}
The problem constraints ensure that a unit is included in exactly one cluster, so that subsets $S$ must form a partition $\Pi$. The value of a partition is given by the problem objective function. The drawback of (MP) is that it contains an exponential number of binary variables to explicitly define $\mathcal{S}$. Hence, we  consider its restricted  version. The idea is to formulate (MP) with only a fraction of the $y_S$ variables. Then, solving its linear relaxation, we can obtain reduced costs for the absent variables $y_S$ and determine whether a new variable/column $y_S$ is to be introduced in the relaxed and restricted (MP), or the current solution is optimal for that problem. Branching is applied each time a  not integral solution is found until optimality is proved. The reader is referred to the following works and the references therein for further details on the following topics: \cite{Desrosiers2005}, for a precise presentation about column generation; \cite{Barnhart96branch-and-price:column} for a detailed explanation about branch-and-price; and to \cite{Deleplanque2020}, for a recent application of those techniques. A pseudocode of this method is provided in Algorithm \ref{algo:CG} and explained in detail in Section \ref{sec:5}.

\begin{algorithm}[htb]
\DontPrintSemicolon
\KwIn{An instance of \GCPP with data $C, G = (V,E)$.}
\KwOut{An optimal partition $\Pi$ of $V$. }
$\mathbb{S} \gets \mbox{Initiate($C,G$)}$\;
$\mbox{optimality} \gets \mbox{false}$\;
$\mbox{node} \gets \mbox{root node}$\;

\While{optimality = false}{
$( \gamma^*, y^*) \gets  \mbox{Solve}\mathbf{((Relaxed MP )}_{\mathbb{S}}$, node)\;
$S \gets \mbox{Solve\_Pricing\_Problem}( \gamma^*$, node)\;
\If{$\bar c(y_S) < 0$}{
		$\mathbb{S} \gets \mathbb{S} \cup S$\;
		}
\Else{
		\If{$y^* \mbox{ integral }$}{
		        \If{$\mbox{Upper\_Bound}(y^*)$}{
			          $\mbox{optimality} \gets \mbox{true}$\;
			 }
			 \Else{
			     $\mbox{node} \gets \mbox{Next\_Node(MP)}$
			 }
		 }
		\Else{
			\If{$\mbox{Lower\_Bound}(y^*)$}{
			$\mbox{optimality} \gets \mbox{true}$\;
			}
			\Else{
			      $\mbox{Branch}(y^*)$\;
			      $\mbox{node} \gets \mbox{Next\_Node(MP)}$
			}
		}			
			
	}

} 
\caption{{\sc \BP for \GCPP}}
\label{algo:CG}
\end{algorithm}

\subsection{Relaxed restricted master problem}

Here we explain the solution procedure of the relaxed master problem at the root node. The same procedure is applied in the remaining nodes. The particularities involved in the solution of branched nodes can be found in Section \ref{sec:5}.

Let $\mathbb{S} \subseteq \cal{S}$ be a subset of all the feasible clusters. The relaxed and restricted master problem is:
\begin{eqnarray*}
\mathbf{(Relaxed MP)}_{\mathbb{S}} \hspace*{1cm} & \min&\displaystyle\sum_{S\in \mathbb{S}} c_Sy_S\hspace*{4cm} \textbf{Dual Multipliers} \\
&s.t.& 
\displaystyle \sum_{S \in \mathbb{S}\, : \, i \in S}y_S = 1, \quad  \forall i\in V, \hspace*{1.85cm} \gamma_i  \text{ unrestricted}\\
&  & y_S \ge 0, \quad \forall\,S \in \mathbb{S}.
\end{eqnarray*}
%
Observe that the dual multipliers associated with each constraint are emphasized in the right-hand side of the formulation above.

The dual of the relaxed and restricted master problem is
\begin{eqnarray*}
\mathbf{(DP)}_{\mathbb{S}}  \hspace*{1cm} &\max&\displaystyle 
 \sum_{i=1}^n \gamma_i\\
&s.t.& \sum_{i\in S} \gamma_i \le c_S, \quad \forall S\in \mathbb{S},\\
&   &\gamma_i \mbox{ unrestricted,} \quad \forall i\in V.
\end{eqnarray*}

Given an optimal solution $\gamma^*$ of $\mathbf{(DP)}_{\mathbb{S}}$,
we can obtain the reduced cost of an absent variable $y_S$ of the master problem as:
$$ \bar c(y_S)= c_S- \sum_{i\in S} \gamma_i^*.$$
If it can be proved that the reduced costs of all the missing variables are nonnegative, then the master problem is solved to optimality. Otherwise, any variable $y_S$ with $\bar c(y_S)<0$ induces a new column to be included in $\mathbf{(Relaxed MP)}_{\mathbb{S}}$ to (possibly) improve the incumbent solution. We refer to the pricing problem as the problem of finding a cluster $S \in \cal{S}$ such that
 $c_S - \sum_{i\in S}\gamma^*_i<0$, or to prove that it does not exist. If, after solving the pricing problem, one or more new variables $y_S$ are introduced in $\mathbf{(Relaxed MP)}_{\mathbb{S}}$,  then it is solved again. Otherwise, the relaxed master problem is solved to optimality.

\section{The pricing problem\label{sec:pricing}}

Step 6 of Algorithm \ref{algo:CG}, i.e., the solution of the pricing problem, is an important step. The problem consists in answering the question
$$\mbox{``Is $c_S-\sum_{i\in S}\gamma^*_i<0$ for some $S\in \mathcal{S}$?''}$$
To respond to the query we define a new combinatorial problem on the graph $G=(V,E)$ where inputs are  the  costs  $c_{ij}$ associated to each pair of nodes $i,j \in V$ and node weights $-\gamma^*_i$ for all $i \in V$. Then, the Maximum-weighted Graph-Connected Single-Clique (\GCSC) on $G$ consists in: Given a graph $G=(V,E)$ with weights associated with  each pair of nodes and each individual node, find a connected subset of $V$, minimizing the sum of both node weights and pairs-of-nodes weights. The reader should observe that in our application we solve minimization problems since weights can be positive and negative. This problem is related with the prize collecting Steiner tree problem, \cite{Ljubic06}, and the maximum weight connected subgraph problem, \cite{Alvarez-Miranda2013}, although in both cases the graph structure, weights and the objective function are different. The \GCSC  reduces to the maximum-weighted  clique problem when $G$ is a complete graph, therefore the former is trivially $\mathcal{NP}$-hard, \cite{Balas1987}.

The problem described above can be formulated in each pricing iteration at the root node of the master problem. As before, we leave the necessary branching modifications to Section \ref{sec:5}.

In spite of its exponential worst-case complexity, Step 6 can be implemented in such a way to maintain an efficient computation. In fact, it is not necessary to find the optimal $S$, that is, to calculate the exact minimum $\bar c(y_S)$. It is sufficient to find any $S\in \mathcal{S}$ for which $\bar c(y_S) < 0$ (and even more than one of such $S$ if possible). Therefore, we can solve \GCSC using a heuristic method and only when the heuristic fails, we calculate its exact solution. At the end of the algorithm, the exact solution of the pricing problem is surely needed to certify optimality in (MP), that is, proving that all missing $S$'s are such that $\bar c(y_S) \ge 0$. Nevertheless, before that, hopefully a large amount of required variables are detected heuristically.

In the next subsections, we propose some MILP models to solve \GCSC. The main differences among models are the type of constraints that impose connectivity.

\subsection{Flow-based formulation\label{ss:flowsin}}

The idea behind this formulation is that if a set $S \subseteq V$ is connected, then
a source node can send a unit of flow to  any node of $S$ using the auxiliary network induced by $S$. 
Let $G_D = (V ,A)$ be a digraph with set of arcs,  $A$, so defined: Two arcs  $(i,j)$ and $(j,i)$ for every edge $e_{ij} (=e_{ji})\in E$. For each subset $S\subset V$ one of  its nodes is assumed to be a source and all the remaining nodes ask for a unit of flow that must be sent from that source. 
Then, an objective function is minimized with respect to a node set $S$, but constraints will try to establish a flow from the source to the nodes of $S$. If a flow is permissible, then those nodes are connected and $S$ is feasible, so that $y_S$ is a candidate variable/column for the restricted master problem. Although in principle,  we may assume  that any node of $V$ could be the source, this would produce many symmetric solutions. They are broken imposing that, for any connected $S$, the only source within $S$ is the largest index node.

For this formulation one needs  flow variables $f_{ij}$ defined for all pairs $i,j$ such that $(i,j) \in A$. In addition, the following variables are required.
For $i \in V$, the variable $x_i$ is defined as:
\begin{eqnarray*}
x_i&=&
\begin{cases}
1, & \mbox{if node $i$ is in the cluster,} \\
0, & \mbox{otherwise.}
\end{cases}
\end{eqnarray*}
For any $i, j=1,\ldots,n \mbox{ such that } i< j$, the variable $z_{ij}$ is defined as:
\begin{eqnarray*}
z_{ij}&=&
\begin{cases}
1, & \mbox{if nodes $i$ and $j$ are in the  cluster,} \\
0, & \mbox{otherwise.}
\end{cases}
\end{eqnarray*}

For any $(i,j) \in A$, the variable $f_{ij}$ is defined as:
\begin{eqnarray*}
f_{ij}&=& \mbox{amount of flow sent from node $i$ to node $j$.}
\end{eqnarray*}
The flow-based formulation of \GCSC  is:
\begin{eqnarray}
\label{fo:1}
\mathbf{(F_{flow})} & \min&\displaystyle\sum_{i \in V} \sum_{j \in V: j>i}^nc_{ij}z_{ij}
-\displaystyle\sum_{i=1}^n \gamma_i^* x_i\\
\label{f1:1}
&s.t.&z_{ij}\le x_i, \quad \forall i,j \in V: i<j,\\ 
\label{f1:2}
&& z_{ij}\le x_j, \quad \forall i,j \in V: i<j,\\
\label{f1:3}
& & z_{ij} \ge x_i+x_j-1, \quad \forall i,j \in V: i<j,\\
\label{f2:1}
& &\hspace*{-0.25cm} \sum_{i \in V: (i,k) \in A} f_{ik}- \sum_{i\in V: (k,i) \in A} f_{ki}  \ge x_k+(n-2)( x_j-1),
\,  \forall k,j \in V:  j>k, \\
\label{f2:3}
& & \sum_{j \in V: (i,j) \in A} f_{ij} \le \sum_{j \in V: j <i} z_{ji}+\sum_{j \in V: i<j} z_{ij}, \quad \forall i \in V,\\
\label{z_dom}
& &z_{ij}\ge 0, \quad \forall i, j \in V: i<j,\\
\label{f_dom}
& &f_{ij}\ge 0,  \quad \forall (i,j)\in A,\\
\label{x_dom}
& &x_i\in\{0,1\}, \quad \forall i\in V.
\end{eqnarray}

The objective function \eqref{fo:1}  accounts for the reduced cost. Constraints \eqref{f1:1}-\eqref{f1:3} are the usual inequalities of the Clique Partitioning problem to ensure that $z_{ij}=x_ix_j$.
Constraints \eqref{f2:1} are the flow conservation law, valid for all nodes of
the cluster except for the node with the greatest index. This node is the source, so a flow of the cardinality of the cluster minus one can leave the node.
Constraints \eqref{f2:3} provide an upper bound of the outflow from any node $i \in V$, in addition, if this  node does not belong to the cluster the right hand side of the constraints is 0, i.e., there is not outflow.
Lastly, \eqref{z_dom} - \eqref{x_dom} define the domain of the variables.

An alternative formulation is given in Appendix; where an auxiliary node is considered as source node. That formulation is more natural and intuitive than the one given in this section,  but it provides worse computational results. In spite of that, we decided to keep it in this manuscript because it can ease the understanding of the formulation in this section.

Formulation $\mathbf{(F_{flow})}$ can be strengthened with the families of valid inequalities
 described in \ref{ss:valid-flow} in Appendix.

The minimum reduced cost is $\overline{c}_S=c_S-\sum_{i=1}^n \gamma_i^*x^*_i$, where $c_S=\sum_{i=1}^n\sum_{j=i+1}^n c_{ij}z^*_{ij}$.  If $\overline{c}_S\ge 0$, then the linear relaxation of the master problem is optimal. Otherwise the column $y_S$, that is the incident vector of $S$, is introduced to the restricted master problem (see Step 8 of Algorithm \ref{algo:CG}).

%
%

\subsection{Arborescence formulation  \label{ss:mtzsin}}

The rationale  behind this formulation is that if a node set $S$ is connected, then we can establish a directed spanning subtree using any node of $S$ as the root, and assigning labels to all other nodes of $S$  representing their corresponding positions in the ascending ordered sequence  of  distances from the root to the nodes.
Those type of constraints  are known as Miller-Tucker-Zemlin (MTZ) inequalities,
introduced to solve the Traveling
Salesman Problem in \cite{MTZ60}, and used in other routing problems, \cite{Laporte92, LG1996, BG2014,LM14}.

Let $G_D = (V,A)$ be an auxiliary network as defined in Subsection \ref{ss:flowsin}.
The MTZ description of the Spanning Tree builds an
arborescence rooted at the source node, and in which the arcs follow
the direction from the root to the leaves: Binary variables $t_{ij}$, defined for every $(i,j) \in A$, will take value $1$ if the
arc $(i,j)\in A$ belongs to the arborescence, 0 otherwise. Then, continuous variables $\ell_i$ will indicate the position according to the distance from the root to node $i$ in the ordered sequence of distances from the root to the nodes  using only arcs of the arborescence. Binary variables $x$ and $z$ are defined as in the previous formulation and, as before, to avoid symmetric optimal solutions, for any node set $S$ only the node with the highest index can be the root.


Thus, the arborescence-based formulation of \GCSC is:
\begin{eqnarray}
\nonumber
\mathbf{(F_{MTZ})} &\min& \sum_{i\in V} \sum_{j\in V:i<j} c_{ij}z_{ij}
-\displaystyle \sum_{i \in V} \gamma_i^* x_i \\
\nonumber
&s.t.&  \eqref{f1:1}-\eqref{f1:3}, \eqref{z_dom},\eqref{x_dom} \\
\label{f3:1}
& & \ell_i+1 \le \ell_j+n(1-t_{ij}), \quad \forall (i,j) \in A, \\
\label{f3:2}
& &  t_{ij}+t_{ji}\le z_{ij}, \quad \forall (i,j) \in A: i<j,\\
\label{f4:1}
& &  \sum_{i \in V: (i,k) \in A} t_{ik} \ge x_j+x_k-1, \quad \forall k,j \in V: j>k, \\
\label{f3:7}
& & t_{ij}\in \{0,1\},  \quad \forall (i,j) \in A,\\
\label{f3:10}
& & \ell_i\in \mathbb{R},  \quad \forall i \in V.
\end{eqnarray}

Constraints \eqref{f3:1} guarantee that the label assigned to
node $j$ is at least as great as the label assigned to node $i$ when the arc $(i,j)\in A$ is chosen. Actually, these constraints only avoid cycles, but combined with constraints \eqref{f3:2}, they also exclude
arcs incident to any node $i$ not in $S$.
Constraints \eqref{f4:1} ensure that there is at least one arc incident to all the nodes of $S$ (with the exception of the one with the highest index). Those arcs will form an arborescence, the nodes of the arborescence are the optimal connected component $S$.
Finally, the domain of the variables is defined in \eqref{f3:7} and \eqref{f3:10}.

Formulation $\mathbf{(F_{MTZ})}$ can be strengthened with the family of valid inequalities
 described in \ref{ss:valid-MTZ} of the Appendix.

An alternative formulation, where an auxiliary node is used as source node, is presented in Subsection 
\ref{ss:mtzcon} in the Appendix .  The formulation  $\mathbf{(F_{MTZ})}$ outperforms
that formulation. Nevertheless, we decided to keep it in this manuscript because the former is more natural
and intuitive.

%

\subsection{Relaxations}

If some of the models above are formulated without some   ``model constraints'', then the resulting formulation will be referred to as ``relaxation''. Relaxations are solved faster, but of course, the solution can be unfeasible to the original model. The idea is to iteratively add   constraints to the relaxation, hopefully not too many, from the removed constraints set to find a feasible solution of the original model. Relaxations have been coded in SCIP by implementing a constraint handler (\cite{GleixnerEtal2018OO}). We have explored the possibility of improving the computational times using two relaxations.

\subsubsection{Clique relaxation\label{ss:incomp1}}

In the first relaxation, clique equations $z_{ij}=x_ix_j$,  modeled by constraints \eqref{f1:1}-\eqref{f1:3}, are discarded. Since this type of constraints involves binary variables and they are $O(n^2)$, all MILP problems can be solved faster without their explicit representation. Then, given an incumbent solution, a separation oracle tests  by full enumeration whether it violates some clique inequality and if so, it is inserted into the MILP model. As it will be seen in the computational section, in some cases this strategy has obtained good results.

\subsubsection{Connectivity relaxation\label{ss:incomp2}}

In the second relaxation, connectivity constraints of $\mathbf{F_{flow}}$ and $\mathbf{F_{MTZ}}$ are discarded (while retaining the clique constraints \eqref{f1:1}-\eqref{f1:3}). Suppose that an oracle determines that a node subset $S$ is not connected because at least one pair $i, j \in S$ is not connected in $G[S]$. Then, if $i,j$ should be in the   same node subset, it should include at least one node out of $S$ to be the \textit{bridge} used to connect  $i$ and $j$. That is, for a given $S$ and $i,j \in S$, the connectivity constraints are represented by:
\begin{equation}
\label{expf}
\sum_{\ell=i+1\, : \, \ell \not\in S}^n  z_{i \ell}  +
       \sum_{\ell=1\, : \, \ell \not\in S}^{i-1} z_{\ell i}-z_{ij}\ge 0.
\end{equation}
The formal proof of this result can be found as Theorem 2.1 in \cite{BPR17}.
Note that the number of constraints (\ref{expf}) is exponential, but they can
be separated efficiently.
For a given element of a partition  $S\subseteq V$ of an incumbent solution, consider the auxiliary complete graph $G_S$ in which edge lengths are $l_{ij} = 0$ if $(i,j) \in E[S]$, $l_{ij} = 1$ otherwise. Let $LSP(i,j)$ be the shortest path distance from node $i$ to node $j$ (this can be computed by the Floyd-Warshall algorithm).
If the maximum value of $LSP(i,j)$ for $i,j \in S$ is equal to 1, then subset $S$ is not connected. The formal description of the separation procedure is described in Algorithm \ref{alse}.

\begin{algorithm}[htb]
\KwIn{$G=(V,E)$, $(\bar{z},\bar{x})$ a solution of connectivity relaxation,  $S:= \{k \: : \:  \bar{x}_{k}>0\}$.}
\KwOut{Violated cuts of the family \eqref{expf}.}
\For{$i,j (i<j)\in S$}{
Compute $LSP(i,j)$ in the complete graph $G_S$ with length of edges defined by:
 $$l_{ij}:=
\begin{cases}
0, & \mbox{if $e_{ij}\in E[S],$}\\ 
1, & \mbox{otherwise.}
\end{cases}
$$
\If{$LSP(i,j)>0$ ($i$ and $j$ are not connected in $G[S]$)}{
Add the following inequality of family \eqref{expf}:
\begin{equation}
\label{cutz:1}
 \sum_{\ell=i+1\, : \, \ell \not \in S}^{n}  z_{i \ell}  +
     \sum_{\ell=1\, : \, \ell \not \in S_k}^{i-1} z_{\ell i}-z_{ij}\ge 0.
\end{equation}
}
}
\Return{\hspace*{-0.1cm}\emph{\textbf{: }}All violated cuts found from family \eqref{expf}.}
\caption{Separation Algorithm}
\label{alse}
\end{algorithm}

\section{A shrinking-based and a MILP-relaxed matheuristic\label{sec:4}}

In this section, two new heuristics for the \GCPP  are described.
The first one, called \textit{Random Shrink} (RS) heuristic, is a fast, constructive method to compute quickly an approximate solution. It is flexible enough to be applied to \GCSC problem as well, and in fact it is the heuristic that has been used to solve the pricing problem. The second heuristic is based on the approximated solution of the linear relaxation of (MP)  in each node of the branch-and-bound (B\&B) tree, in which the pricing problems are solved only through the RS heuristic. If the heuristic cannot find a negative reduced cost column, then the algorithm stops.

\subsection{Random shrink heuristic}

\label{sec:3}
This section describes the first new heuristic algorithm devised to solve quickly  both, \GCPP and the \GCSC (with minimal modifications). Finding a feasible solution of the former problem is necessary in Step 1 of Algorithm \ref{algo:CG}, because the master problem must be initialized with a set of variables $y_S$, while solving the latter problem is necessary in Step 6 to find one or more new variables $y_S$ with negative reduced costs. As the algorithm is embedded in a \BP scheme, it must run quickly.

The new algorithm is based on the idea of shrinking the nodes of the graph $G=(V,E)$ in such a way that we  have, in each iteration, a feasible partition, i.e., the elements of the partition are connected subsets as subgraphs on $G$. As a matter of fact, the \GCPP data input is itself a partition, the one in which every singleton is a cluster. If two connected nodes are shrunk, the resulting graph will contain $|V|-1$ nodes, but one node is actually containing two of the original, that is, the partition begins to have a structure. So, shrink can be repeated over and over, until a stopping criterion is satisfied.

More formally, shrink is the operation described in Algorithm \ref{algo:shrink}. Input are the data structure $G^h = <V^h, E^h, c^h, \pi^h>$ and the node pair $i,j \in V^h$ with $e_{ij} \in E^h$, where: $V^h$ is the active node set, each node representing a clique; $E^h$ is the active edge set; $c^h$ are the shrinking costs, defined for every pair $i,j \in V^h$ (when $c^h < 0$ it is actually a gain); $\pi^h_i$ are the clique costs, defined for every active node $i \in V^h$. Furthermore, we define $f(V^h)$ as the objective function of partition $V^h$, $f(V^h) =\sum_{i \in V^h} \pi^h_i$.

The output is a data structure $G^{h+1} = <V^{h+1}, E^{h+1}, c^{h+1}, \pi^{h+1}>$, in which $|V^{h+1}| = |V^h|-1$. When pair $i,j \in V^{h}$ is shrunk, then $j$ and $(i,j)$ are deleted from nodes and edges respectively. Then, the clique costs $\pi^h_i$  increases or decreases by cost $c^h_{ij}$, see Steps 3 and 4. All links and the costs of $j$ are allocated to $i$, see Steps 5-8. Finally, the objective function $f(V^h)$ of the \GCPP is updated in Step 9. Note that if we have to solve the \GCSC, then in Step 9 we can define
$f(V^{h+1}) =\min_{i \in V^{h+1}} \{\pi^{h+1}_i\}$.

\begin{algorithm}[htb]
\DontPrintSemicolon
\KwIn{Data structure: $G^h = <V^h, E^h, c^h, \pi^h>$, the pair $i,j\in V^h$ with $e_{ij} \in E^h$.}
\KwOut{The data structure: $G^{h+1} = <V^{h+1}, E^{h+1}, c^{h+1}, \pi^{h+1}>$.}
$V^{h+1} \gets V^h  \setminus \{j\}$\;
$E^{h+1} \gets E^h  \setminus \{ e_{ij} \} $\;
$\pi^{h+1}_k \gets \pi^h_k \: \forall k (\ne i,j) \in V^h$\;
$\pi^{h+1}_i \gets \pi^h_i + c^h_{ij}$\;
$c_{k \ell}^{h+1} \gets c^h_{k\ell} \:\: \forall e_{k \ell} \in E^{h+1}$\;
       \For{$k \in V^h: e_{jk} \in E^h$}{
				$E^{h+1} \gets E^{h+1} \cup$ $ (i,k) - (j,k)$ \;
				$c^{h+1}_{ik} \gets c^{h+1}_{ik} + c^h_{jk}$\;
		}
$f(V^{h+1}) \gets$ $\sum_{k \in V^{h+1}} \pi^{h+1}_k$\;
\Return{$G^{h+1}$}\;
\caption{ Subroutine SHRINK}
\label{algo:shrink}
\end{algorithm}

Before applying subroutine SHRINK in Algorithm  \ref{algo:shrink},  an edge $e_{ij} \in E^h$ must be elicited, but then, the choice can favor optimality or diversification.
According to the \textit{optimality criterion}, $i$ and $j$ must be such that the cost $c_{ij}^h$ is minimum. In this way, if the cost is negative, shrinking $i,j$ is the best decrease of the incumbent objective function $f(V^h)$. According to the \textit{diversification criterion}, $i$ and $j$ can be selected randomly, but preferably the pair has been often assigned to different clusters in previous local optima.

\begin{algorithm}[htb]
\DontPrintSemicolon
\KwIn{The \GCPP problem, max\_start,  max\_random\_move.}
\KwOut{The optimal partition: $G^{best}$.}
\For{$s := 1 \, \mathbf{to}  \mbox{ max\_start}$}{
\If{ $s > 1$}{
	$\mbox{random\_move = Unif(1,max\_random\_move)}$\;
	\For{$t := 1 \, \mathbf{to} \mbox{ random\_move}$}{
		 $e_{ij}$ $\gets \mbox{Random\_choice}(W)$\;}
		 $G^{h+1} \gets \mbox{Shrink}(G^h)$\;
		 $h \gets q$\;	
	}
$\mbox{fine := false}$\;
\While{$\mbox{fine = false}$}{
$e_{ij}$ $\gets \arg \min \{c_{ij}^h\}$\;
\If{$c_{ij}^h < 0$}{
$G^{h+1} \gets \mbox{Shrink}(G^h)$\; 	
$h \gets q$\;
}
\Else{
$G^{best} \gets \mbox{Update\_Best}(G^h)$\;
$W \gets \mbox{Update\_Weight}(W)$\;
$\mbox{fine := true}$\;
} 
} 
} 
\Return{$G^{best}$}\;
\caption{{\sc Random Shrink}}
\label{algo:rs}
\end{algorithm}

The \textit{Random Shrink} (RS) procedure is described in Algorithm \ref{algo:rs}. Input data are an instance of \GCPP, and parameters: $max\_start$ and  $max\_random\_move$.
At the beginning, every cluster is a singleton:
$V^h = V$, $E^h = E$, $\pi^h = 0$, $f(V^h) = 0$. Then the graph is shrunk until a local optimum is found. In the first run, the method is greedy: Random moves are skipped, see Step 2. From the second round onwards, the first selections of pairs $i,j$, such that $e_{ij}\in E^{h}$, are random, see Steps 4-7. The number of random moves is itself random (drawn from a discrete uniform distribution from 1 to $max\_random\_move$), and depends on the input parameter $max\_random\_move$, see Step 3.

The loop of Steps 9-17 is a standard greedy procedure, in which the best edge $e_{ij}$ is selected in Step 10. The graph is shrunk if it provides an improvement of the objective function (Steps 11-13), otherwise, if necessary, the procedure updates the best solution so far (Steps 15-17). All is iterated $max\_start$ times, an input parameter, see  Step 1. In every iteration, information about all local optima is stored in matrix $W$. The role of $W$ is to lead the diversification: When implementing the random choice of $e_{ij}$, it is taken into account how many times an edge $e_{ij}$ has been in local optima (that is, $i$ and $j$ were put into different clusters). The most it has been excluded from local optima, the highest is the probability of being selected randomly. To this purpose, when an egde $e_{ij}$ is not in the local optimum $E^{h}$, then the value $w_{ij}$  is augmented by one. When doing a random choice, the probability of choosing $e_{ij}$ is $Pr[i,j] = w_{ij}/W$, with $W =\sum_{e_{ij} \in E} w_{ij}$.

\subsection{A new MILP-relaxed matheuristic}
\label{ss:matheu}

The \BP described in Algorithm \ref{algo:CG} can be readily modified to calculate an approximate solution instead of the optimum. It is sufficient to solve the pricing problems using only the RS heuristic, and never calculate the exact solution of the different \GCSC problems. Branching is still done to solve the master problem \GCPP, as it is usually not much time consuming. In other words, (MP) is solved adding columns which empirically are tested to be useful, but not enough to certify optimality. In this way, \GCPP is heuristically solved very quickly, but at the price of only solving approximately each linear relaxation of the master problem at any node of the B\&B tree. In spite of that, as our computational results show, the quality of the solutions are rather good.

\section{A branch-and-price implementation\label{sec:5}}

In this section, we describe technical details of Algorithm \ref{algo:CG}, that were set aside so far for the sake of brevity. They are the generation of an initial solution, the branching rule, the Farkas pricing, and the convergence of column generation.

\subsection{Starting solutions}
\label{pricerheu}

Good starting solutions, that is, the initial clusters $y_S$'s with their costs, are important to prune the searching tree. So, in this phase the RS algorithm is run with an abundant iteration limit and all local optima are used to define initial variables $y_S$'s and feasible solutions of \GCPP.

\subsection{Ryan-and-Foster branching}

Branching occurs when the LP solution of the master problem contains fractional variables. In B\&P, it is not trivial to define a branching rule to resolve fractional solutions without fixing variables that were already in the pool of columns, \cite{Barnhart96branch-and-price:column}. Here, in Step 19 of Algorithm \ref{algo:CG}, the Ryan-and-Foster branching has been implemented, as it has considerable advantages over alternatives.

The Ryan-and-Foster (R-F) has been introduced to solve set partitioning problems, see \cite{Ryan1981}, and now is one of the most widespread techniques. If at a node of the master problem a solution contains fractional variables, the R-F rule creates two new branches as follows: Given two elements $i_1,i_2\in V$, in one branch they will always be in the same cluster, whereas in the other branch they will always be in different cluster.

To implement this branching, we can take advantage of the $x_i$ variables defined on the previous section for the pricing subproblem:

\begin{itemize}
\item {\bf Left branch: } If $i_1$ and $i_2$ must be in different clusters implies that none of the variables corresponding to clusters containing $i_1$ and $i_2$ can assume positive values, i.e.,
$$\sum_{S\ni i_1,i_2}y_S=0 \Rightarrow x_{i_1}+x_{i_2}\le 1.$$
\item {\bf Right branch: } Since $i_1$ and $i_2$ must be in the same cluster then the following sum must be equal to 1:
$$\sum_{S\ni i_1,i_2}y_S=1 \Rightarrow x_{i_1}=x_{i_2}.$$
\end{itemize}

In practice, when a new node is created (or candidate to be solved), existing $y_S$ variables local bounds are modified according to the above constraints. These bounds are taken into account in Step 5 of Algorithm \ref{algo:CG} when function $\mbox{Solve}\mathbf{((Relaxed MP )}_{\mathbb{S}},\mbox{node})$ is called.

Furthermore, to solve the pricing problem, new variables not satisfying node requirements should be avoid. In function $\mbox{Solve\_Pricing\_Problem}( \gamma^*,\mbox{node})$ (Step 6 of Algorithm \ref{algo:CG}) we include the information of the ancestor nodes: $x_{i_1}+x_{i_2}\le 1$ (left branch); and $x_{i_1}=x_{i_2}$ (right branch).

Finally, Steps 9--19 of Algorithm \ref{algo:CG} work similar to the common B\&B algorithm with some particularities of our R-F branching. When a fractional solution is found Branch($y^*$) finds a pair $i_1,i_2\in I$ for which

$$0<\sum_{S\ni i_1,i_2}y_S<1,$$

\noindent to create left and right nodes, using most fractional criterion. Lower\_Bound($y^*$)  and Upper\_Bound($y^*$) update the lower  and upper bound of (MP), respectively. Both functions return TRUE in case that bounds coincide, so (MP) is solved. Otherwise, Next\_Node(MP) decides which is the next node to be studied.  We let solver (SCIP,  \cite{GleixnerEtal2018OO}) default-settings decide on the next node to be explored.

\subsection{Farkas pricing}

Another important element in any \BP algorithm is the so called Farkas pricing. This is the subroutine that provides new columns to the restricted master if it is locally infeasible.

We observe that infeasibility only can happen on a new node of the branching tree. If it happens because the R-F branching produces incompatible conditions, then the node is declared infeasible and no call to any pricing problem is necessary. Otherwise, the R-F conditions are compatible but perhaps not enough $y_S$ variables are available in the pool to build a feasible solution. However, one can ensure \textit{fictitious} feasible solutions by the following construction.
\begin{propo}
Assume that one initializes the pool of columns with all the singletons $y_{\{i\}}$ for all $i\in V$ and all pairs $y_{\{i,j\}}$ for all $i,j\in V$. Then, if the R-F branching leads to a node with compatible conditions \GCPP is always feasible.
\end{propo}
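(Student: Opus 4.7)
Let $R$ and $D$ denote the pairs of indices forced to coincide (right branches) and to separate (left branches) by the ancestors of the current node in the branching tree, and let $\mathcal{C}=\{C_1,\dots,C_k\}$ be the equivalence classes on $V$ of the transitive closure of $R$. Compatibility at this node is precisely the statement that no pair of $D$ sits inside some $C_r$. The plan is to exhibit a nonnegative vector $y$ that satisfies every partition equality $\sum_{S\ni i}y_S=1$ of the relaxed restricted master together with the local bounds $y_S=0$ enforced by the R-F rule.

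The main idea is to let the blocks $C_r$ of $\mathcal{C}$ themselves drive the support of $y$: for every singleton class $\{i\}$ I would set $y_{\{i\}}=1$, and for every two-element class $\{i,j\}$ I would set $y_{\{i,j\}}=1$. These columns live in the initial pool by hypothesis, and their local bounds are not switched off, because a singleton or pair column equal to a class $C_r$ contains both or avoids both endpoints of every $R$-pair (any $R$-pair lies inside its own class, which coincides with the column) and cannot contain both endpoints of any $D$-pair (compatibility forbids a $D$-pair from residing in a single class). The partition equalities are then immediate, since every $i\in V$ belongs to exactly one class of $\mathcal{C}$.

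The delicate step, and what I anticipate as the main obstacle, is handling classes $C_r$ of cardinality at least three, for which no column belongs to the initial pool. I would address it by a bookkeeping argument over the branching history: every right branch $(i_1,i_2)$ is selected from a fractional variable $y_S$ with $\{i_1,i_2\}\subseteq S$ and $y_S>0$, and since $y_S$ survived the local bounds at the ancestor node, $S$ must be compatible with all previously imposed $R$-conditions, which in turn forces $S$ to contain the whole equivalence class of both $i_1$ and $i_2$ at that moment. Consequently, as soon as a chain of right branches welds three or more indices into a single class $C_r$, the pool at the current node already holds a compatible column $S_r\supseteq C_r$. Assigning $y_{S_r}=1$ for each such $S_r$, selected so that the chosen columns together with the singletons and pairs used for the smaller classes partition $V$, completes the fictitious feasible solution. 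Turning this bookkeeping into a clean induction on the depth of the node is the technical heart of the proof; the remaining verifications of the partition equalities and of the modified local bounds reduce to direct substitutions.
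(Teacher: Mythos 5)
Your reduction to the equivalence classes of the right-branch relation, and the observation that singleton and pair columns tile any class of size one or two while respecting the local bounds, is an explicit rendering of what the paper's very short proof leaves implicit. What you omit, however, is the device that actually carries the paper's argument: the initial pool is completed with \emph{fictitious} pair columns $y_{\{i,j\}}$ for every non-edge $e_{ij}\notin E$, priced at $\hat c_{ij}=+M$ with $M\gg 0$. This is needed twice --- first so that \emph{every} pair, connected or not, really is available as a column, and second so that a covering of $V$ by possibly disconnected (hence infeasible for the original \GCPP) columns is harmless, because the big-$M$ cost guarantees such a column never survives into an optimal solution. Your proposal takes the availability of all pairs as given by hypothesis but never explains why columns that are not connected subsets of $G$ may legitimately certify feasibility; that is precisely the ``fictitious feasibility'' point of the paper.

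The genuine gap is in your treatment of classes $C_r$ with $|C_r|\ge 3$. The bookkeeping argument does not close this case: the column $S$ whose fractional value triggered the right branch on $(i_1,i_2)$ is only guaranteed compatible with the conditions active \emph{at the ancestor where that branch was created}; branchings performed deeper in the tree may fix $y_S=0$ (e.g.\ a later right branch on a pair with exactly one endpoint in $S$), so the current node's pool need not contain any admissible column containing $C_r$. Even when such a column $S_r$ exists, it is a union of several classes, and your closing step --- choosing the $S_r$'s ``so that the chosen columns \ldots partition $V$'' --- asserts exactly the feasibility being proved, with no argument that a consistent selection exists. You acknowledge that the induction is ``the technical heart of the proof,'' and that is the part that is missing. (The paper's own two-line proof is also silent on classes of size three or more; but a correct completion should not rest on the history of generated columns, which is the weakest link in your sketch.)
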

\begin{proof}
The reader may note that if $e_{ij}\not \in E$, we are augmenting in the initial pool a fictitious edge to $E$ with cost $\hat c_{ij}=+M, \quad M\gg 0$. 
These variables always ensure \textit{fictitious} feasible solutions of the restricted master problem (actually they may not be connected). Moreover, if it happens that in a node, one of those fictitious elements is used in a partition, it would represent an actual infeasible solution but it will never be optimal. 
$\hfill{\Box}$
\end{proof}

In conclusion, the above result justifies that \GCPP does not need a Farkas pricing routine.

\subsection{Convergence of column generation}

In column generation, it is well-known that the columns which certify optimality emerge at the last iterations of the procedure. This phenomenon has been studied and different solutions have been proposed in the literature to overcome it. Among others, \cite{DUMERLE1999229}, \cite{Pessoa2010}, and \cite{SATO2012636}  have designed procedures to minimize the negative impact of the issue in the convergence of column generation algorithms. See also \cite{SATO2019236} or \cite{Deleplanque2020} for other recents applications of those techniques.

Those stabilization procedures are based on the principle that adding in each iteration the column with the best reduced cost may lead to convergence problems. In some way the conclusion of those papers is that  the pricing problem optimal solution should be taken into account only in the latter iterations. Following that principle and basing on the results of Section \ref{section:6.1}, we solve the pricing problem heuristically for the first iterations. Hence, our algorithm stabilizes itself  (\cite{BLANCO2021105124}) as it is supported with our empirical results shown in Table \ref{answer:table1}.

\section{Computational studies\label{s:comp-test}}

Algorithms are tested on the instances previously used in \cite{BPR17}, and on new instances with greater size. The experiment layout is as proposed in \cite{NAD-2003}:
Data are composed of $n$ units on which $m$ binary features, $F_i = \{0,1\}, i = 1,\ldots,m$, are recorded. Units belong to one of two groups, each group is composed of $n/2$ units. If one unit belongs to group 1, then  $\Pr[F_i = 1] = p_c$ for all $i = 1,\ldots,m$, otherwise, if the unit belongs to group 2, then $\Pr[F_i = 1] = 1 - p_c$ for all $i = 1,\ldots,m$. If $p_c$ is close to one, then the two groups are well separated, as $p_c$ gets closer to 0.5, the separation is less and less precise. Units are connected through arcs: If two units (or nodes) belong to the same group, then the probability of a joining arc is $p_{in}$ (the probability of an inner arc). If the two nodes belong to two different groups, then the probability of a joining arc is $p_{out}$ (the probability of an outer arc). For the effect of the probabilities, the number $X_{in}$ of vertices of the same group and the number $X_{out}$ of vertices of the other group to which a given vertex $i \in V$ is connected are two random variables, with expected values $E[X_{in}] \approx n p_{in}/2$ and  $E[X_{out}] \approx n p_{out}/2$. All experiments are run with $p_{in} > p_{out}$, so that connectivity provides information: If a node $i$, whose membership is uncertain, is connected with a node $j$ that is known to belong to Group $k$, then it is likely that $i$ belongs to $k$ as well.

In all our computational experience, models are coded in C and solved with SCIP 6.0.1, \cite{GleixnerEtal2018OO}, using the optimization solver CPLEX 12.8  on an Intel(R) Core(TM) i7-4790 CPU  @4.00 GHz  32GB RAM.
SCIP is a C library of subroutines specially devised to implement branch-cut-and-price and is distributed free-of-charge, under academic license, by the Zuse Institute  Berlin (ZIB). We thank the SCIP team for the helpful technical advices in the course of this research.

\subsection{Deciding the pricing problem implementation}\label{section:6.1}
 From now on, we call heuristic pricer the application of any heuristic for solving the pricing problem. In case that the pricing problem is optimally solved, we call it exact pricer. First of all, we want to decide whether combining exact and heuristic pricers is worth. We have begun by analyzing the performance of Algorithm \ref{algo:CG} for solving to optimality \GCPP. For that reason, to test the usefulness of combining the heuristic and the exact pricers, we run a pilot study on instances of size 20, 30 and 36 nodes. We have compared two different implementations: One  combining heuristic and the exact pricers, the other one only using the exact pricer. In addition, two version of exact pricers have been tested as well, one using the Flow-based formulation, see Subsection \ref{ss:flowsin}, and the other using the Arborescence formulation, see Subsection \ref{ss:mtzsin}. Models $\mathbf{(F^0_{flow})}$ and $\mathbf{(F^0_{MTZ})}$, see Appendix \ref{appendixA}, were discarded at an early stage of our computational experiments since preliminary results show that the use of the auxiliary node does not add any advantage concerning computational time.

Figure \ref{performanceProfiles-heu} reports the results of the 60 instances tested for each implementation (three sizes, ten instances per size, and two formulations). It compares  the number of solved instances versus time of Algorithm \ref{algo:CG} using the Flow-based and the Arborescence formulations, and combining or excluding the heuristic pricer. One can observe that the combination of the exact and the heuristic pricer (line Heurvar=TRUE) is better than excluding the heuristic pricer (Heurvar=FALSE). These results suggest that solving the pricer problem combining the RS heuristic and any exact MILP is more efficient than using only MILP. Therefore, this is the strategy implemented to the largest instances too.

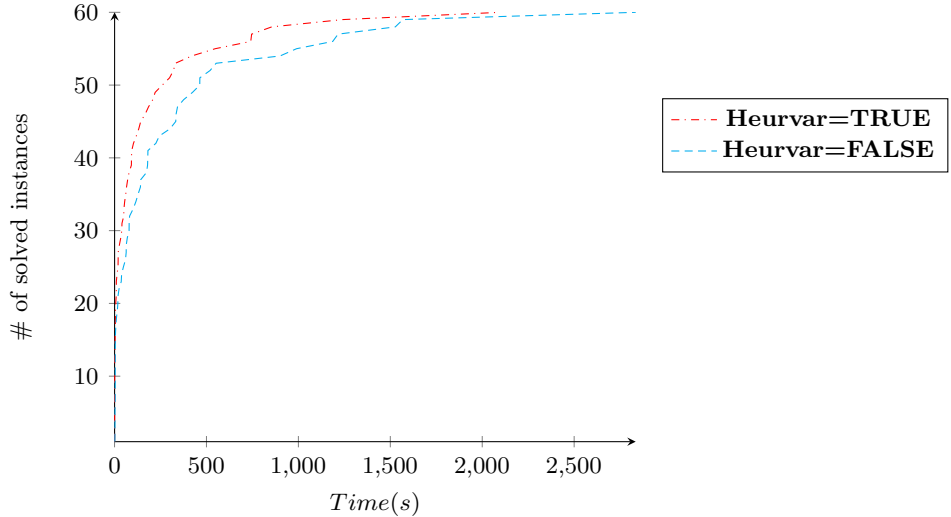
\begin{figure}[htb] \centering
	\begin{tikzpicture}[scale=1.0,font=\footnotesize]
	\begin{axis}[axis x line=bottom,  axis y line=left,
	xlabel=$Time(s)$,
	ylabel=\# of solved instances,
	legend style={at={(1.6,0.8)}}]
	
	\addplot[red,dashdotted] plot coordinates {		

(	0.33	,	1	)
(	0.39	,	2	)
(	0.47	,	3	)
(	0.63	,	4	)
(	0.68	,	5	)
(	0.71	,	6	)
(	0.8	,	7	)
(	0.83	,	8	)
(	0.92	,	9	)
(	1.1	,	10	)
(	1.2	,	11	)
(	1.31	,	12	)
(	1.99	,	13	)
(	2.26	,	14	)
(	3.37	,	15	)
(	3.52	,	16	)
(	4.47	,	17	)
(	5	,	18	)
(	5.07	,	19	)
(	6.41	,	20	)
(	9	,	21	)
(	9.74	,	22	)
(	9.8	,	23	)
(	13.57	,	24	)
(	19.01	,	25	)
(	19.620001	,	26	)
(	19.700001	,	27	)
(	26.190001	,	28	)
(	33.759998	,	29	)
(	39.389999	,	30	)
(	40.439999	,	31	)
(	49.91	,	32	)
(	51.09	,	33	)
(	55.150002	,	34	)
(	60.939999	,	35	)
(	65.269997	,	36	)
(	71.330002	,	37	)
(	79.089996	,	38	)
(	91.699997	,	39	)
(	92.010002	,	40	)
(	93.519997	,	41	)
(	102.970001	,	42	)
(	118.330002	,	43	)
(	131.839996	,	44	)
(	142.369995	,	45	)
(	166.490005	,	46	)
(	184.669998	,	47	)
(	208.009995	,	48	)
(	220.899994	,	49	)
(	259.609985	,	50	)
(	299.410004	,	51	)
(	320.100006	,	52	)
(	331.140015	,	53	)
(	419.75	,	54	)
(	547.619995	,	55	)
(	740.960022	,	56	)
(	745.700012	,	57	)
(	855.150024	,	58	)
(	1240.26001	,	59	)
(	2097.23999	,	60	)

	};
	\addlegendentry{\textbf{Heurvar=TRUE}}	
	\addplot[cyan, densely dashed] plot coordinates {		
(	1.36	,	1	)
(	1.98	,	2	)
(	2.19	,	3	)
(	2.29	,	4	)
(	2.37	,	5	)
(	2.82	,	6	)
(	3.44	,	7	)
(	3.56	,	8	)
(	3.62	,	9	)
(	3.66	,	10	)
(	3.66	,	11	)
(	3.82	,	12	)
(	3.91	,	13	)
(	3.99	,	14	)
(	4.05	,	15	)
(	5.42	,	16	)
(	7.15	,	17	)
(	10.35	,	18	)
(	15.26	,	19	)
(	18.1	,	20	)
(	18.76	,	21	)
(	24.51	,	22	)
(	36.459999	,	23	)
(	38.419998	,	24	)
(	48.09	,	25	)
(	59.470001	,	26	)
(	63.32	,	27	)
(	63.619999	,	28	)
(	69.300003	,	29	)
(	79.110001	,	30	)
(	79.43	,	31	)
(	81.93	,	32	)
(	101.559998	,	33	)
(	116.279999	,	34	)
(	127.959999	,	35	)
(	140.910004	,	36	)
(	142.419998	,	37	)
(	173.960007	,	38	)
(	179.639999	,	39	)
(	180.850006	,	40	)
(	181.589996	,	41	)
(	224.929993	,	42	)
(	243.339996	,	43	)
(	301.769989	,	44	)
(	332.950012	,	45	)
(	334.75	,	46	)
(	342.350006	,	47	)
(	376.089996	,	48	)
(	424.160004	,	49	)
(	463.589996	,	50	)
(	464.040009	,	51	)
(	519.559998	,	52	)
(	552.580017	,	53	)
(	900.119995	,	54	)
(	990.219971	,	55	)
(	1186.849976	,	56	)
(	1215.75	,	57	)
(	1527.380005	,	58	)
(	1568.089966	,	59	)
(	2834.840088	,	60	)	
	};
	\addlegendentry{\textbf{Heurvar=FALSE}}	
	\end{axis}
	\end{tikzpicture}
	\caption{Performance profile graph of \#solved instances using the combined heuristic and  exact pricers  or only using the exact pricer for $n=$20,30,36 (the exact pricer uses two formulations: $\mathbf{(F_{flow})}$ and $\mathbf{(F_{MTZ})}$).}  \label{performanceProfiles-heu}
\end{figure}

Concerning the tailing off effect of this heuristic pricer, Table \ref{answer:table1} shows the number of necessary variables to certify optimality for instances of different size, depending on whether heuristic pricer is applied (\emph{TRUE}) or not (\emph{FALSE}). In this table, \emph{Initial} is the average number of variables added from the beginning, \emph{Heur} is the average number of variables added after the heuristic pricer interation, and \emph{Exact} is the average number of variables added when the pricing problem is solved exactly.
\begin{table}[htb]
\scriptsize
\centering
\begin{tabular}{|l|rrrr|rrrr|rrrr|}
\hline
	& \multicolumn{4}{|c|}{$n=20$} & \multicolumn{4}{|c|}{$n=30$} & \multicolumn{4}{|c|}{$n=36$}\\
\hline
 Heurvar& Initial & Heur & Exact & \bf Total& Initial & Heur & Exact & \bf Total& Initial & Heur & Exact & \bf Total\\
 \hline
 FALSE&51.5&0.0&27.6&\bf79.1&79.4&0.0&69.5&\bf 148.9&98.3&0.0&164.9&\bf 263.2\\
 TRUE&51.5&23.5&5.4&\bf80.4&79.4&43.7&12.8&\bf 135.9&98.3&102.8&34.3&\bf 235.4\\
  \hline
 Variation&&&&\bf +1.7\%&&&& \bf -8.8\%&&&& \bf-10.6\% \\
 \hline

\end{tabular}
\caption{Average number of variables using the combined heuristic and  exact pricers  or only using the exact pricer for $n=$20,30,36 (the exact pricer uses two formulations: $\mathbf{(F_{flow})}$ and $\mathbf{(F_{MTZ})}$).}
\label{answer:table1}
\end{table}

When the heuristic pricer is applied, the problem is solved using a smaller number of variables. It means that the pricer heuristic not only saves computational time but also reduces degeneracy (that is, the situation in which reduced cost variables do not decrease the objective function). Furthermore, it can be seen that the impact is more remarkable for bigger instances.

\subsection{Comparison of different formulations of \GCSC}
We have continued our study solving (MP) using different alternatives for the pricing problem subroutine. In this set of experiments, as recommended by the above pilot study, Algorithm \ref{algo:CG} has been run combining  the RS heuristic, and using the different MILP
formulations (see Section \ref{sec:pricing}) to solve the pricing problem. 

Five  different MILP pricing routines are compared.
The first two MILP models were $\mathbf{(F_{flow})}$ and $\mathbf{(F_{MTZ})}.$
In the next three formulations, MILP are initialized without some constraints and/or variables, that are included whenever necessary to separate infeasible solutions only after that the separation subroutine is invoked. We refer to Flow Clique Relaxation and MTZ Clique Relaxation when clique constraints are removed (see Subsection \ref{ss:incomp1}) from Flow-based and Arborescence formulations, respectively. The fifth formulation, Connectivity Relaxation, removes the connectivity constraints (see Subsection \ref{ss:incomp2}).

The results reported in Table \ref{t:Form05} are averages calculated after solving ten instances of each size, letting a maximum of 24 hours of computation.
This table contains five blocks, one for each implementation of Algorithm \ref{algo:CG}. We report there the average solution time (Av.Time), the average gap at termination (Av.GAP), 
and the number of unsolved instances after the time limit is reached  (Unsol). The best results in terms of times, gaps and number of unsolved problems are written in bold.

\begin{rem}
The lower bound used to calculate the gap at termination is given by the B\&B process as usually. However, if the linear relaxation of the MP has not been solved at the time limit, another lower bound is still available, see \cite{LubbeckeMarcoE2005STiC}. Particularly, for the GCCP the lower bound during the resolution of the root node is

$$LB= \sum_{i=1}^n \gamma_i^*+n\min_{S\in \mathcal{S}}\bar c(y_S),$$

\noindent provided that the last pricing problem has been solved exactly.
\end{rem}

\begin{table}[htb]
\centering
\begin{adjustbox}{max width=1.0\textwidth}
\begin{tabular}{|c|r|r|r|r|r|r|r|r|r|r|r|r|r|r|r|}
\hline
	Model	& \multicolumn{3}{c|}{$\mathbf{(F_{flow})}$} & \multicolumn{3}{c|}{$\mathbf{(F_{MTZ})}$} & \multicolumn{3}{c|}{Flow Clique Relaxation} &	\multicolumn{3}{c|}{MTZ Clique Relaxation} &	\multicolumn{3}{c|}{Connectivity Relaxation}\\	 \hline	
	n	&	Av.Time	&	Av.GAP	& Unsol	&	Av.Time	&	Av.GAP	&	Unsol &	Av.Time	&	Av.GAP	&	Unsol &	Av.Time	&	Av.GAP	&	Unsol &	Av.Time	&	Av.GAP	&	Unsol	\\ \hline
20	&	1.39	&\bf	0.00	&\bf	0	&	4.74	&\bf	0.00	&\bf	0	&	2.14	&\bf	0.00	&\bf	0	&	2.16	&\bf	0.00	&\bf	0	&\bf	1.25	&\bf	0.00	&\bf	0	\\
30	&\bf	34.71	&\bf	0.00	&\bf	0	&	64.36	&\bf	0.00	&\bf	0	&	62.49	&\bf	0.00	&\bf	0	&	40.75	&\bf	0.00	&\bf	0	&	53.37	&\bf	0.00	&\bf	0	\\
36	&\bf	419.81	&\bf	0.00	&\bf	0	&	547.49	&\bf	0.00	&\bf	0	&	546.17	&\bf	0.00	&\bf	0	&	739.49	&\bf	0.00	&\bf	0	&	817.11	&\bf	0.00	&\bf	0	\\
40	&	3545.18	&\bf	0.00	&\bf	0	&	1503.33	&\bf	0.00	&\bf	0	&	2731.25	&\bf	0.00	&\bf	0	&\bf	1444.89	&\bf	0.00	&\bf	0	&	5319.05	&\bf	0.00	&\bf	0	\\
50	&	18331.38	&\bf  0.00	&\bf	0	&	24820.23	&	14.03	&	2	&\bf	16634.83	&\bf	0.00	&\bf	0	&	21320.21	&	1.49	&	2	&	40006.47	&	0.01	&	1	\\
54	&	50646.86	&	1.03	&\bf	3	&\bf	43684.52	&	5.01	&\bf	3	&	51861.38	&	1.30	&\bf	3	&	44095.21	&\bf	0.66	& \bf	3	&	71221.69	&	4.12	&	7	\\
60	&	81152.89	&	1.83	&	5	&	60697.51	&	28.28	&	6	&	80950.89	&	3.74	&	7	&\bf	56988.24	&	\bf1.96	&\bf	4	&	86405.08	&	6.68	&	10	\\ \hline
Total Result	&\bf	22018.89	&\bf	0.41	&\bf	8	&\bf	18760.31	&\bf	6.76	&\bf	11	&\bf	21391.74	&\bf	0.71	&\bf	10	&\bf	17423.4	&\bf	0.59	&\bf	9	&\bf	29117.72	&\bf	1.54	&\bf	18	\\

\hline
\end{tabular}
\end{adjustbox}
\caption{Average results for models with  pricing problems based on formulations introduced in Section \ref{sec:pricing}. \label{t:Form05}}
\end{table}

The results in this table point out that the best formulations are  $\mathbf{(F_{flow})}$ and MTZ Clique Relaxation. The former solves 62 out 70 instances up to optimality, the latter solves 61 instances, that is, one problem less, but with a slightly smaller average computational time. Comparing Algorithm \ref{algo:CG} with previous MILP methods, reported in \cite{BPR17}, we can observe that the maximum solved size has been improved from 40 to 60 units with the same time limit, and that computational times for solved instances have improved as well.

In Figure \ref{performanceProfiles-all}, the results of Table \ref{t:Form05} are summarized.
Profiles show that pricing routines based on $\mathbf{(F_{MTZ})}$ and MTZ 
Clique Relaxation formulations are giving the best performance in terms of number of solved instances. However, Arborescence formulations $\mathbf{(F_{MTZ})}$ and the (MTZ Clique Relaxation) are giving the best solution times when the instances are solved.
It can also be seen that removing the connectivity constraints (Connectivity Relaxation) does not work better than removing the clique  constraints (Flow Clique Relaxation and MTZ Clique Relaxation). The latter do not solve 10 and 9 instances, respectively, and the former 18 instances.

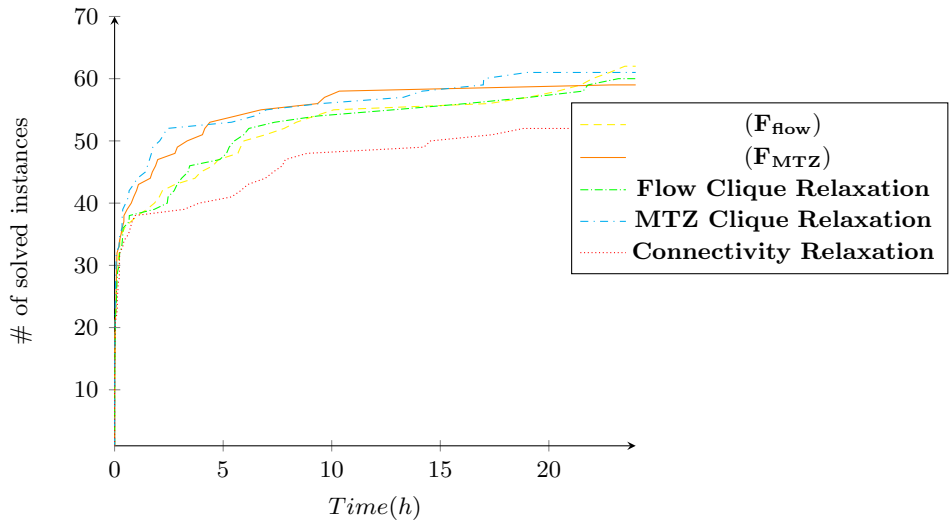
\begin{figure}[htb] \centering
	\begin{tikzpicture}[scale=1.0,font=\footnotesize]
	\begin{axis}[axis x line=bottom,  axis y line=left, ytick={10,20,30,40,50,60,70},ymax=70,
	xlabel=$Time(h)$,
	ylabel=\# of solved instances,
	legend style={at={(1.6,0.8)}}]

\addplot[yellow,densely dashed] plot coordinates {
(	0.0000667636111111111	,	1	)
(	0.000185567222222222	,	2	)
(	0.000204615277777778	,	3	)
(	0.000206085277777778	,	4	)
(	0.000233699444444444	,	5	)
(	0.000272559166666667	,	6	)
(	0.000286311944444444	,	7	)
(	0.000323341388888889	,	8	)
(	0.000861319166666667	,	9	)
(	0.00120986	,	10	)
(	0.00325020722222222	,	11	)
(	0.00445815777777778	,	12	)
(	0.00489096472222222	,	13	)
(	0.00508665027777778	,	14	)
(	0.00799159694444445	,	15	)
(	0.0101240011111111	,	16	)
(	0.0110942872222222	,	17	)
(	0.0137459977777778	,	18	)
(	0.0155619811111111	,	19	)
(	0.0202033191666667	,	20	)
(	0.0237932077777778	,	21	)
(	0.051750145	,	22	)
(	0.0543435838888889	,	23	)
(	0.0734329733333333	,	24	)
(	0.0747165933333333	,	25	)
(	0.0769417063888889	,	26	)
(	0.0829288058333333	,	27	)
(	0.0836097716666667	,	28	)
(	0.09258205	,	29	)
(	0.1140784625	,	30	)
(	0.172963324722222	,	31	)
(	0.200975324722222	,	32	)
(	0.234750942777778	,	33	)
(	0.248973371666667	,	34	)
(	0.320278015	,	35	)
(	0.461107313333333	,	36	)
(	0.734388020833333	,	37	)
(	1.06653225361111	,	38	)
(	1.48282009555556	,	39	)
(	1.83418972444444	,	40	)
(	2.05990519194444	,	41	)
(	2.20031480583333	,	42	)
(	2.78140597861111	,	43	)
(	3.70168212888889	,	44	)
(	3.96346489805556	,	45	)
(	4.52633382166667	,	46	)
(	4.78580620666667	,	47	)
(	5.67710449222222	,	48	)
(	5.80103569888889	,	49	)
(	5.95653591583333	,	50	)
(	6.95638020833333	,	51	)
(	7.8043733725	,	52	)
(	8.37323676222222	,	53	)
(	9.21995225694444	,	54	)
(	10.0933886719444	,	55	)
(	17.2338964844444	,	56	)
(	19.0393706597222	,	57	)
(	20.4126453994444	,	58	)
(	21.4725412327778	,	59	)
(	22	,	60	)%
(	22.8	,	61	)%
(	23.5	,	62	)%
(	24	,	62	)

	};
	\addlegendentry{\textbf{$\mathbf{(F_{flow})}$}}

\addplot[orange,solid] plot coordinates {
(	0.000178637777777778	,	1	)
(	0.0002030075	,	2	)
(	0.000290795277777778	,	3	)
(	0.000305183333333333	,	4	)
(	0.0006167975	,	5	)
(	0.000639271388888889	,	6	)
(	0.000930536666666667	,	7	)
(	0.00201857888888889	,	8	)
(	0.00204510694444444	,	9	)
(	0.002923325	,	10	)
(	0.00369192888888889	,	11	)
(	0.00378533916666667	,	12	)
(	0.0041599225	,	13	)
(	0.00847580805555556	,	14	)
(	0.0132430194444444	,	15	)
(	0.0205707125	,	16	)
(	0.0207017369444444	,	17	)
(	0.0220450761111111	,	18	)
(	0.0249273469444444	,	19	)
(	0.030378225	,	20	)
(	0.0332247947222222	,	21	)
(	0.0492453638888889	,	22	)
(	0.0505171880555556	,	23	)
(	0.0591862275	,	24	)
(	0.0705013233333333	,	25	)
(	0.0737217033333333	,	26	)
(	0.0813317872222222	,	27	)
(	0.0848756577777778	,	28	)
(	0.0926170941666667	,	29	)
(	0.0961183080555556	,	30	)
(	0.101141484444444	,	31	)
(	0.11086007	,	32	)
(	0.200856441944444	,	33	)
(	0.227349090555556	,	34	)
(	0.296521911666667	,	35	)
(	0.346077406111111	,	36	)
(	0.426826171944444	,	37	)
(	0.430642191666667	,	38	)
(	0.5780992975	,	39	)
(	0.767091471388889	,	40	)
(	0.879252861944445	,	41	)
(	1.0083936225	,	42	)
(	1.09828430166667	,	43	)
(	1.64253377277778	,	44	)
(	1.73704264333333	,	45	)
(	1.8865437825	,	46	)
(	1.98610582138889	,	47	)
(	2.78002983944444	,	48	)
(	2.88890190972222	,	49	)
(	3.34180501305556	,	50	)
(	4.04686903222222	,	51	)
(	4.15389322916667	,	52	)
(	4.38847629111111	,	53	)
(	5.57890625	,	54	)
(	6.74795789944445	,	55	)
(	9.3509375	,	56	)
(	9.66791883666667	,	57	)
(	10.3490017361111	,	58	)
(	22.8946397569444	,	59	)
(	24	,	59	)

	};
	\addlegendentry{\textbf{ $\mathbf{(F_{MTZ})}$}}	
	\addplot[green,densely dashdotted] plot coordinates {
(	0.000105298888888889	,	1	)
(	0.000197283611111111	,	2	)
(	0.000280831666666667	,	3	)
(	0.000316824722222222	,	4	)
(	0.000339676388888889	,	5	)
(	0.000401379444444444	,	6	)
(	0.000411512222222222	,	7	)
(	0.000623387777777778	,	8	)
(	0.00140996388888889	,	9	)
(	0.00186624527777778	,	10	)
(	0.00355899972222222	,	11	)
(	0.00431783805555556	,	12	)
(	0.00626052333333333	,	13	)
(	0.0084328275	,	14	)
(	0.0136045222222222	,	15	)
(	0.0154468344444444	,	16	)
(	0.0216386666666667	,	17	)
(	0.0237645466666667	,	18	)
(	0.0337886788888889	,	19	)
(	0.03738753	,	20	)
(	0.037542055	,	21	)
(	0.0390270361111111	,	22	)
(	0.0672827997222222	,	23	)
(	0.0782807244444444	,	24	)
(	0.0876675669444445	,	25	)
(	0.0892197419444445	,	26	)
(	0.109881320555556	,	27	)
(	0.122149505555556	,	28	)
(	0.138667085555556	,	29	)
(	0.210419684444444	,	30	)
(	0.210453457222222	,	31	)
(	0.236364678333333	,	32	)
(	0.327780253055555	,	33	)
(	0.357161390555556	,	34	)
(	0.377403428888889	,	35	)
(	0.391169603055556	,	36	)
(	0.663210856111111	,	37	)
(	0.677992011111111	,	38	)
(	1.82673773861111	,	39	)
(	2.42485785583333	,	40	)
(	2.46278591583333	,	41	)
(	2.74378634972222	,	42	)
(	2.87855333111111	,	43	)
(	3.08573730472222	,	44	)
(	3.37824408638889	,	45	)
(	3.45863444	,	46	)
(	4.84395779083333	,	47	)
(	5.15554307722222	,	48	)
(	5.27684733083333	,	49	)
(	5.51929361972222	,	50	)
(	5.95235731333333	,	51	)
(	6.18081759972222	,	52	)
(	7.37353895388889	,	53	)
(	9.34189236111111	,	54	)
(	12.8449359808333	,	55	)
(	16.1934982638889	,	56	)
(	19.0919661458333	,	57	)
(	21.5766579861111	,	58	)
(	21.8	,	59	)%
(	23.2	,	60	)%
(	24	,	60	)

	};
	\addlegendentry{\textbf{Flow Clique Relaxation}}		
	
	\addplot[cyan,dashdotted] plot coordinates {		
	
(	0.0000562447222222222	,	1	)
(	0.0000617547222222222	,	2	)
(	0.000112851666666667	,	3	)
(	0.000114038055555556	,	4	)
(	0.000186861111111111	,	5	)
(	0.000190472777777778	,	6	)
(	0.000240207777777778	,	7	)
(	0.0003920425	,	8	)
(	0.000412724166666667	,	9	)
(	0.00129943694444444	,	10	)
(	0.00149143027777778	,	11	)
(	0.00169629722222222	,	12	)
(	0.00219618333333333	,	13	)
(	0.00226506	,	14	)
(	0.00269566944444444	,	15	)
(	0.00371396388888889	,	16	)
(	0.00738218777777778	,	17	)
(	0.0119443702777778	,	18	)
(	0.012554175	,	19	)
(	0.0185923725	,	20	)
(	0.0244565030555556	,	21	)
(	0.0253227361111111	,	22	)
(	0.029150005	,	23	)
(	0.0331921175	,	24	)
(	0.0335283216666667	,	25	)
(	0.0403903283333333	,	26	)
(	0.0427223969444444	,	27	)
(	0.0503870433333333	,	28	)
(	0.0591365475	,	29	)
(	0.0659268655555556	,	30	)
(	0.0706740019444445	,	31	)
(	0.0736331347222222	,	32	)
(	0.19920361	,	33	)
(	0.222873552222222	,	34	)
(	0.244138505833333	,	35	)
(	0.340729438055556	,	36	)
(	0.346206156388889	,	37	)
(	0.351673278888889	,	38	)
(	0.376424763888889	,	39	)
(	0.512949354444444	,	40	)
(	0.637940470277778	,	41	)
(	0.668584187777778	,	42	)
(	0.871211547777778	,	43	)
(	1.05215060777778	,	44	)
(	1.41712999138889	,	45	)
(	1.51656711166667	,	46	)
(	1.6084562175	,	47	)
(	1.69678575305556	,	48	)
(	1.76299167222222	,	49	)
(	2.0762291125	,	50	)
(	2.26069241	,	51	)
(	2.40903645833333	,	52	)
(	5.38258680555556	,	53	)
(	6.43780164944444	,	54	)
(	6.98830132388889	,	55	)
(	9.49440755222222	,	56	)
(	13.2922504338889	,	57	)
(	14.1774490016667	,	58	)
(	16.9809537761111	,	59	)
(	17	,	60	)%
(	19	,	61	)%
(	24	,	61	)

	};
	\addlegendentry{\textbf{MTZ Clique Relaxation}}	

	\addplot[red,densely dotted] plot coordinates {		
	
(	0.0000275277777777778	,	1	)
(	0.000132214166666667	,	2	)
(	0.00016676	,	3	)
(	0.000177286666666667	,	4	)
(	0.000182900833333333	,	5	)
(	0.000259201388888889	,	6	)
(	0.000271471111111111	,	7	)
(	0.000289458055555556	,	8	)
(	0.000401732777777778	,	9	)
(	0.001566425	,	10	)
(	0.00346299805555556	,	11	)
(	0.00358237916666667	,	12	)
(	0.00618690638888889	,	13	)
(	0.00635992361111111	,	14	)
(	0.00648648694444444	,	15	)
(	0.0133207341666667	,	16	)
(	0.0134439669444444	,	17	)
(	0.01811928	,	18	)
(	0.0246942477777778	,	19	)
(	0.0295369233333333	,	20	)
(	0.0411796486111111	,	21	)
(	0.0927273644444444	,	22	)
(	0.123879055555556	,	23	)
(	0.125999883055556	,	24	)
(	0.128180313055556	,	25	)
(	0.155878838333333	,	26	)
(	0.160185377222222	,	27	)
(	0.215635867777778	,	28	)
(	0.221035393611111	,	29	)
(	0.225935295833333	,	30	)
(	0.226364525555556	,	31	)
(	0.239195285277778	,	32	)
(	0.374471672777778	,	33	)
(	0.437956983888889	,	34	)
(	0.639380289722222	,	35	)
(	0.722954983055556	,	36	)
(	0.803080308611111	,	37	)
(	0.943416544722222	,	38	)
(	3.19998345277778	,	39	)
(	3.89046468111111	,	40	)
(	5.38353515611111	,	41	)
(	5.80693250861111	,	42	)
(	6.13583550333333	,	43	)
(	6.94267957888889	,	44	)
(	7.30682725694444	,	45	)
(	7.72102593305556	,	46	)
(	7.86409125444444	,	47	)
(	8.86302951388889	,	48	)
(	14.2457486977778	,	49	)
(	14.5543847655556	,	50	)
(	17.4059613716667	,	51	)
(	18.8298502605556	,	52	)
(	24	,	52	)

	};
	\addlegendentry{\textbf{Connectivity Relaxation}}	
	
	\end{axis}
	\end{tikzpicture}
	\caption{Performance profile graph of \#solved instances using different pricing problem formulations for $n=$20-60 (70 instances).}  \label{performanceProfiles-all}
\end{figure}

To better understand the \BP algorithm performance, the reader can see in Table \ref{answer:table2} different parameters computed as averages on ten instances: the gap at the root node (RootNodeGap(\%)); the number of necessary variables (Total) split by the variables added  at the beginning (Initial), obtained trough the heuristic pricer (Heur), and given by the exact pricer (Exact); the number of times that this latter routine is called (ExactIter); the nodes of the master problem branch-and-bound tree (Nodes); and the percentage of CPU time that the algorithm uses to solve the pricing problem (PricingTime(\%)).

The results are shown until $n=50$ to focus on those instances that were solved to optimality. We are presenting  the results only for one of the five different formulations of the pricing problem, namely \textit{Flow Clique Relaxation formulation}, since all the others exhibit similar results. The reader should note that the parameters which are analyzed refer to the MP rather that to the pricing problem and therefore the formulation used in the pricing problem is not very important to explain their behaviour.

\begin{table}[htb]
\scriptsize
\centering
\begin{tabular}{|r|r|rrrr|r|r|r|}
\hline
 $n$&RootNodeGap(\%)&\bf Total& Initial & Heur & Exact & ExactIter  & Nodes& PricingTime(\%)\\
 \hline
$n=20$&	0.50	&\bf	80.70	&	53.40	&	23.80	&	3.50	&	3.50	&	1.60	&	97.95	\\
$n=30$&	0.13	&\bf	137.20	&	81.60	&	44.30	&	11.30	&	6.10	&	1.20	&	99.91	\\
$n=36$&	0.21	&\bf	234.40	&	100.90	&	103.70	&	29.80	&	12.00	&	1.60	&	99.98	\\
$n=40$&	0.00	&\bf	286.20	&	108.50	&	126.10	&	51.60	&	18.00	&	2.10	&	99.99	\\
$n=50$&	0.01	&\bf	443.40	&	136.50	&	226.60	&	80.30	&	34.80	&	1.30	&	100.00	\\

 \hline


\end{tabular}
\caption{Branch-and-price performance}
\label{answer:table2}
\end{table}

Here we can see the strength of the \BP algorithm: it can solve the problem using very few variables comparing with other already proposed formulations to solve the GCCP, which are of the order of $O(n^2)$ (see \cite{BPR17}). The goodness of the root node gap makes the size of the B\&B tree small. The counterpart is the time that the algorithm spends solving the pricing problem. To deal with it one has to save calls to the exact routine what is done by means of initial columns, the heuristic pricer, and adding several variables with negative reduced cost in each iteration.

\subsection{Comparing different heuristic algorithms on \GCPP}

The previous computational section shows that heuristic procedures are still needed to solve the largest instances of the \GCPP. Here, results about Algorithm \ref{algo:rs} (RS) are reported. Parameter $\theta = max\_random\_steps$ has been fixed such that $\theta \in \{\lceil|V|/3\rceil,\lceil |V|/2 \rceil, \lceil 2|V|/3\rceil\}$,
(the greater the number, the more the algorithm is driven by random choices of $e_{ij}$-s). The number of starting solutions is $max\_start = 10|V|$, the same number used in \cite{BPR17} to test heuristics Variable Neighborhood Search (VNS) and Random Restart (RR). That is, all algorithms try to improve the same number of initial solutions. For the test, algorithms were coded in Julia version 1.03 (\cite{bezanson2017julia}) and run on HP EliteBook with a Intel I5-core CPU.

Computational results are contained in Tables \ref{tab:medprob}, \ref{tab:modbigprob}, \ref{tab:bigprob} and \ref{tab:times}, in which  are reported the objective function and the first iteration ($it_b$) in which the best solution has been found (the largest the value, the most important is the diversification phase). For every $n$, we summarize the percentage gap to the optimal/best value and the average number of iterations.

Table \ref{tab:medprob} considers medium-sized problems for which we know the optimal objective function. Here we compare the new results of RS with the old ones obtained by RR, of VNS, and the optimal values, \cite{BPR17}. Note that some of those optimal solutions could not be found in \cite{BPR17} but are certified with the results of our \BP. It can be seen that average objective values are in favor of the new heuristic RS, as the average results of all three implementations are always better than the corresponding ones of both RR and VNS. If instead it is compared how many times the best solution is found, then it happened 26, 23, 22 for the three version of RS and only 20 times for RR and 9 times for VNS, so the same conclusion holds.

\begin{table}[!htb]
\begin{center}
\tiny
\begin{tabular}{|c|c|c|c|c|c|c|c|c|c|} \hline
Problem & fo[$|V|/3] $& $it_b$ & fo[$|V|/2$] & $it_b$ & fo[$2|V|/3$]  & $it_b$ & RR& VNS & Optimal Solution \\ \hline
 G20\_1&    -114 &     1 &    -114 &     1 &   -114 &     1 &   -114 & -114&   -114 \\
 G20\_2&     -74 &     2 &     -74 &    53 &    -74 &    46 &    -74 & -34&    -74 \\
 G20\_3&    -122 &    12 &    -122 &     7 &   -122 &    10 &   -122 & -122&   -122 \\
 G20\_4&    -112 &     2 &    -112 &     4 &   -112 &    16 &   -112 & -110&    -112 \\
 G20\_5&    -128 &     1 &    -128 &     1 &   -128 &     1 &   -128 & -102&    -128 \\
 G20\_6&    -102 &     2 &    -102 &    93 &   -102 &   125 &   -102 & -96&    -102\\
 G20\_7&    -154 &    11 &    -154 &    23 &   -154 &    23 &   -154 & -102&    -154 \\
 G20\_8&     -96 &   149 &     -94 &   188 &    -96 &    14 &    -94 & -92&     -96 \\
 G20\_9&    -116 &     1 &    -116 &     1 &   -116 &     1 &   -116 & -116&   -116 \\
 G20\_10&   -140 &     1 &    -140 &     1 &   -140 &     1 &   -140 & -140&   -140 \\
\hline																
$\bf n=20$&\bf	0.0	\%&\bf	118.2	&\bf	0.2	\%&\bf	138.6	&\bf	0.0	\%&\bf	124.0&\bf	0.2	\%&\bf	12.0	\%& \\
\hline																

 G30\_1&    -244 &   217 &    -248 &   293 &   -248 &   187 &   -254 & -248&    -254 \\
 G30\_2&    -152 &    33 &    -152 &    83 &   -152 &    22 &   -152 & -152&    -152 \\
 G30\_3&    -210 &   294 &    -206 &   197 &   -210 &   108 &   -200 & -144&    -210 \\
 G30\_4&    -200 &   191 &    -200 &     3 &   -192 &    57 &   -200 & -170&    -200 \\
 G30\_5&    -288 &   104 &    -288 &    87 &   -276 &    81 &   -288 & -276&    -288 \\
 G30\_6&    -260 &    29 &    -260 &    22 &   -260 &    13 &   -260 & -260&    -260 \\
 G30\_7&    -228 &   121 &    -228 &    25 &   -228 &   124 &   -228 & -222&    -228 \\
 G30\_8&    -126 &    29 &    -126 &    29 &   -126 &    22 &   -122 & -108&    -126 \\
 G30\_9&    -276 &   148 &    -274 &    66 &   -276 &   124 &   -276 & -136&    -276 \\
 G30\_10&   -174 &    57 &    -158 &     8 &   -174 &    14 &   -168 & -154&    -176 \\
\hline																
$\bf n=30$&\bf	0.5	\%&\bf	155.2	&\bf	1.5	\%&\bf	136.6	&\bf	1.2	\%&\bf	132.6	&\bf	1.2	\%&\bf	13.3	\%& \\
\hline																
																
 G36\_1&    -296 &    61 &    -300 &   135 &   -296 &   134 &   -296 & -296&    -300 \\
 G36\_2&    -304 &   140 &    -304 &   254 &   -304 &   188 &   -300 & -300&    -304 \\
 G36\_3&    -390 &    36 &    -390 &     3 &   -390 &   329 &   -356 & -340&    -390 \\
 G36\_4&    -336 &    13 &    -336 &     3 &   -336 &    83 &   -326 & -304&    -340 \\
 G36\_5&    -300 &    57 &    -300 &    42 &   -300 &   237 &   -300 & -300&    -300 \\
 G36\_6&    -286 &     1 &    -286 &     1 &   -286 &     1 &   -286 & -286&    -286 \\
 G36\_7&    -344 &   324 &    -320 &   143 &   -318 &   131 &   -310 & -324&    -344 \\
 G36\_8&    -230 &   143 &    -240 &    78 &   -230 &   117 &   -230 & -204&    -246 \\
 G36\_9&    -268 &    85 &    -246 &    34 &   -242 &     4 &   -260 & -242&    -268 \\
 G36\_10&   -290 &    41 &    -290 &   121 &   -290 &   350 &   -290 & -290&    -296 \\
 \hline																
$\bf n=36$&\bf	1.1	\%&\bf	129.7	&\bf	2.1	\%&\bf	127.1	&\bf	2.8	\%&\bf	150.0	&\bf	3.7	\%&\bf	6.1	\%& \\
\hline																

 G40\_1&    -306 &   300 &    -290 &   164 &   -292 &   134 &   -294 & -284&       -318 \\
 G40\_2&    -514 &   107 &    -514 &    54 &   -514 &   196 &   -514 & -508&    -514 \\
 G40\_3&    -306 &   174 &    -300 &    70 &   -306 &   184 &   -288 & -238&       -332 \\
 G40\_4&    -406 &   333 &    -394 &   285 &   -384 &    46 &   -384 & -384&    -412 \\
 G40\_5&    -342 &   196 &    -342 &   372 &   -342 &   141 &   -326 & -342&    -342 \\
 G40\_6&    -336 &   204 &    -326 &    19 &   -296 &    99 &   -292 & -252&       -336 \\
 G40\_7&    -306 &   377 &    -280 &    42 &   -306 &   344 &   -272 & -184&    -330 \\
 G40\_8&    -294 &   175 &    -314 &   111 &   -286 &     5 &   -270 & -252&    -314 \\
 G40\_9&    -374 &   170 &    -386 &   329 &   -354 &     1 &   -396 & -376&    -396 \\
 G40\_10&   -444 &   304 &    -444 &   255 &   -444 &   201 &   -420 & -372&    -456 \\
 \hline																
$\bf n=40$&\bf 3.5	\%&\bf	164.6	&\bf	4.6	\%&\bf	145.4	&\bf	6.4	\%&\bf	136.8	&\bf	8.5	\%&\bf	15.9	\%& \\
\hline

 mean                                                & -249.70 & 116.15& -247.45 & 92.50 &-245.40 & 97.88 &-242.85 &  -224.40 &-253.80   \\ \hline
\mbox{} \#solved & 26        &             & 23         &           & 22        &            & 20       & 9 &\\ \hline
\end{tabular}
\end{center}
\caption{Results on medium-sized problems for different heuristics.}
\label{tab:medprob}
\end{table}

The results on new instances, for which  in most cases the optimal solutions have been found in this research, are reported in Table \ref{tab:modbigprob}. It can be seen again that RS with any parameter is on average better than both VNS and RR, even though this time the heuristics have seldom found the optimal solutions. Best solutions have been found 15, 8, 11 times by the three RS's, 6 by RR, and 5 times by VNS. This shows that there is still room for improving the heuristic algorithms (see next section).

\begin{table}[htb]
\begin{center}
\tiny
\begin{tabular}{|c|c|c|c|c|c|c|c|c|c|} \hline
Problem & fo[$|V|/3]$& $it_b$& fo[$|V|/2$] & $it_b$ & fo[$2|V|/3$]  & $it_b$& RR& VNS & Optimal Solution \\ \hline
 G50\_1&  -494 &   399 &  -510 &   316 &  -510 &   390 &  -492 &  -472 &  -562 \\
 G50\_2&  -636 &   273 &  -636 &   225 &  -642 &    31 &  -642 &  -642 &  -650 \\
 G50\_3&  -610 &   326 &  -630 &    49 &  -664 &    74 &  -576 &  -576 &  -674 \\
 G50\_4&   -488 &    232 &   -470 &    173 &   -464 &     15 &   -446 &   -450 &   -504 \\
 G50\_5&   -644 &     33 &   -644 &   131 &  -644 &   213 &  -644 &  -644 &  -644 \\
 G50\_6&  -358 &    99 &  -380 &   202 &  -400 &    35 &  -298 &  -264 &  -400 \\
 G50\_7&  -536 &   435 &  -532 &   186 &  -534 &    14 &  -498 &  -498 &  -564 \\
 G50\_8&  -614 &    10 &  -614 &   418 &  -614 &    32 &  -622 &  -502 &  -674 \\
 G50\_9&  -638 &   322 &  -636 & 3 &  -636 &   283 &  -614 &  -586 &  -642 \\
 G50\_10&  -462 &   288 &  -446 & 3 &  -446 &   232 &  -446 &  -448 &  -464 \\
 \hline																
{$\bf n=50$}&\bf{	5.2	\%}&\bf{	142.8	}&\bf{	4.9	\%}&\bf{	129.9	}&\bf{	3.9	\%}&\bf{	123.1	}&\bf{	9.3	\%}&\bf{	12.6	\%}& \\
\hline																
 G54\_1&  -790 &    80 &  -788 & 1 &  -788 & 1 &  -788 &  -694 &  -790 \\
 G54\_2&  -588 &   393 &  -580 &   524 &  -580 &   150 &  -596 &  -522 &  -662 \\
 G54\_3&  -542 &    67 &  -542 &   206 &  -542 &    26 &  -510 &  -496 &  -544$^1$ \\
 G54\_4&  -560 &   146 &  -538 &   192 &  -544 &   382 &  -568 &  -446 &  -576 \\
 G54\_5&  -654 &    73 &  -654 &   513 &  -638 &   431 &  -650 &  -578 &  -670 \\
 G54\_6&  -568 &    61 &  -568 &   107 &  -568 &    80 &  -560 &  -564 &  -594 \\
 G54\_7&  -628 &   196 &  -624 &   105 &  -628 &   165 &  -614 &  -614 &  -640 \\
 G54\_8&  -606 &   181 &  -614 &   385 &  -606 &   104 &  -588 &  -578 &  -624 \\
 G54\_9&  -464 &   270 &  -450 &   261 &  -450 &    85 &  -394 &  -462 &  -490$^1$ \\
 G54\_10&  -804 &   368 &  -766 &   406 &  -800 &   378 &  -808 &  -808 &  -808 \\
 \hline																
{$\bf n=54$}&\bf{	3.2	\%}&\bf{	128.9	}&\bf{	4.4	\%}&\bf{	142.7	}&\bf{	4.2	\%}&\bf{	127.8	}&\bf{	5.6	\%}&\bf{	10.1	\%}& \\
\hline																
 G60\_1&  -726 & 5 &  -766 &   142 &  -682 &   199 &  -660 &  -604 &  -782$^1$ \\
 G60\_2&  -732 &   106 &  -732 &   168 &  -732 &   219 &  -636 &  -600 &  -732$^1$ \\
 G60\_3&  -836 &    85 &  -834 &    48 &  -828 &    83 &  -758 &  -832 &  -832 \\
 G60\_4&  -680 & 8 &  -684 &   596 &  -690 &   382 &  -666 &  -558 &  -750 \\
 G60\_5&  -666 &    39 &  -650 &   467 &  -660 &    36 &  -626 &  -666 &  -712 \\
 G60\_6&  -938 &    51 &  -938 &   469 &  -938 &   193 &  -836 &  -788 &  -964 \\
 G60\_7&  -562 &   257 &  -538 &   424 &  -518 &   534 &  -502 &  -500 &  -606 \\
 G60\_8&  -650 &   288 &  -620 &   484 &  -624 &   500 &  -582 &  -658 &  -664 \\
 G60\_9&  -894 &    17 &  -894 &   233 &  -858 &   305 &  -832 &  -848 &  -912 \\
 G60\_10&  -622 &   325 &  -604 &   255 &  -636 &   342 &  -602 &  -510 &  -682$^1$ \\ 
\hline																
{$\bf n=60$}&\bf{	4.5	\%}&\bf{	117.2	}&\bf{	5.3	\%}&\bf{	144.6	}&\bf{	6.4	\%}&\bf{	138.8	}&\bf{	12.4	\%}&\bf{	14.2	\%}& \\
\hline																
 mean & -633.00 &  181.10 & -629.40 & 256.40 & -628.80 & 197.13 & -601.80 & -580.27 &   -660.40 \\ \hline
\# best solution & 15& & 8& & 11& & 6 & 5 &\\ \hline
\end{tabular}
\end{center}
\footnotesize{$^1$ Best solution found by Algorithm \ref{algo:CG}}
\caption{Results on moderately large-sized problems for different heuristics.}
\label{tab:modbigprob}
\end{table}

Finally, in
Table \ref{tab:bigprob},  larger instances are considered, and again averages of objective values are in favor of the new heuristic, as the means of all three implementations are better than those from RR and VNS, and counting how many times the best known solution is found, respectively 14, 8,  2, 1, 2, is still in favor of RS. Regarding what parameter choice of RS is best, it can be seen that it does not make a great difference when the instance size is small, but when it gets larger, it seems that $max\_random\_steps =\lceil |V|/3  \rceil$ gives better results. Finally, the iteration in which the best solution is found (columns $it_b$) exhibits a great variability: This fact suggests that the diversification mechanism devised to explore different solutions has been effective.
Heuristics try to improve the same number of initial solutions and therefore they found the same number of local optima. So why RR and VNS, apparently more sophisticated, are left behind by RS? The reason could be the diversification. Both RR and VNS could be too constrained by the initial solution and they stop too early in inferior local optima.

\begin{table}[htb]
\begin{center}
\tiny
\begin{tabular}{|c|c|c|c|c|c|c|c|c|c|} \hline
Problem & fo[$|V|/3] $& $it_b$& fo[$|V|/2$] & $it_b$ & fo[$2|V|/3$]  & $it_b$& RR& VNS & Best Solution \\ \hline
 G80\_1&-1176 &780 &-1172 &171 &  -1164 &671 &  -1030 &-1086 & -1180 \\
 G80\_2&-1120 & 31 &-1096 &385 &  -1060 &667 &   -968 &-1042 & -1120 \\
 G80\_3&-1314 &220 &-1314 &361 &  -1290 &769 &  -1274 &-1260& -1314 \\
 G80\_4&-1066 &696 &-1036 &638 &  -1048 &780 &   -976 & -900  & -1078\\
 G80\_5&-1346 &213 &-1316 & 30 &  -1346 &568 &  -1234 &-1370  & -1370\\
 G80\_6& -956 & 46 & -956 &194 &   -930 &619 &   -936 & -818 & -1008 \\
 G80\_7&-1298 &193 &-1282 &163 &  -1270 &115 &  -1246 &-1286 & -1298 \\
 G80\_8&-1142 &636 &-1132 & 15 &  -1128 & 33 &   -998 & -904 & -1166\\
 G80\_9&-1368 &207 &-1368 &490 &  -1364 &  8 &  -1190 &-1196 & -1368 \\
 G80\_10&-1504 &730 &-1472 &489 &  -1472 &142 &  -1416 &-1440 & -1504 \\
 \hline																
{$\bf n=80$}&\bf{	1.0	\%}&\bf{	130.3	}&\bf{	2.2	\%}&\bf{	123.9	}&\bf{	2.9	\%}&\bf{	137.2	}&\bf{	9.3	\%}&\bf{	9.5	\%}& \\
\hline																
 G100\_1&-1732 &156 &-1732 &201 &  -1732 &446 &  -1630 &-1482 & -1746 \\
 G100\_2&-2126 &684 &-2110 &891 &  -2090 &868 &  -1730 &-1908 & -2126 \\
 G100\_3&-1544 &687 &-1544 &843 &  -1492 &346 &  -1216 &-1266 & -1544 \\
 G100\_4&-2184 &491 &-2208 &721 &  -2140 &166 &  -2094 &-1966 & -2208 \\
 G100\_5&-1708 & 19 &-1724 &198 &  -1690 &503 &  -1442 &-1386 & -1724 \\
 G100\_6&-2160 &678 &-2160 &295 &  -2160 &798 &  -2176 &-2176 & -2176 \\
 G100\_7&-1860 & 71 &-1838 & 44 &  -1914 &333 &  -1686 &-1756 & -1968 \\
 G100\_8&-1532 & 14 &-1506 &365 &  -1482 &937 &  -1390 &-1484 & -1532 \\
 G100\_9&-2090 &876 &-2084 &890 &  -2068 &309 &  -1934 &-1798 & -2090 \\
 G100\_10&-2276 &907 &-2308 &444 &  -2234 &586 &  -2136 &-2194 & -2308 \\
 \hline																
{$\bf n=100$}&\bf{	1.0	\%}&\bf{	122.6	}&\bf{	1.1	\%}&\bf{	125.4	}&\bf{	2.2	\%}&\bf{	128.0	}&\bf{	10.7	\%}&\bf{	10.7	\%}& \\
\hline																
 mean &-1575.10 &  416.75& -1567.90 &   391.40 & -1553.70 &   483.20 & -1435.10 & -1435.90 & -1591.40\\ \hline
\# best solution & 14 & & 8 & & 2& & 1 & 2& \\ \hline
\end{tabular}
\end{center}
\caption{Results on  large-sized problems for different heuristics.}
\label{tab:bigprob}
\end{table}

Regarding computational times, finding an improved solution with RS is much faster than with RR or VNS, and
the whole computational times are reported in Table \ref{tab:times}. As expected given the simplicity of the algorithm, the RS times are much less than VNS and RR. The reason is that the loop of Steps 9-17 of Algorithm \ref{algo:rs} is operated in $O(n^2)$, and it is repeated at most $O(n)$ times. Therefore it takes $O(n^3)$ operations to calculate a local optimum (to be repeated $\max\_\mbox{start}$ times).
Conversely, VNS and RR are based on local interchange, whose complexity is much higher. For example, it implies a subroutine of $O(n^2)$ only to check the connectivity of interchanging two units.

\begin{table}[htb]
\begin{center}
\small
\begin{tabular}{|c|r|r|r|} \hline
 $n$ & RS & RR & VNS \\ \hline
40 & 0.07 & 2.48 & 1.63\\
60 & 0.27 & 9.91 & 6.71 \\
80 & 0.75 & 30.15 & 19.32 \\
100 & 1.65 & 61.36 & 42.67\\ \hline
\end{tabular}
\end{center}
\caption{Average of computational times (in seconds).}
\label{tab:times}
\end{table}

Having found that the greedy descent performs much better than the local interchange, one may wonder to what extent this result may be applied to other constrained clique partitions. The result strongly depends on the computational cost of shrinking a node with respect to the cost of reassigning a unit. If connectivity constraints are replaced by community constraints, such as the one defined by modularity and/or cohesion, \cite{Cafieri2015}, then these are cases in which a reassignment affects the global properties of clusters. Conversely, the operations of shrinking nodes remains faster, therefore it is very likely that the solution space is explored more efficiently. 

\subsection{MILP-relaxed matheuristic for \GCPP: combining randomized shrink heuristic with branch-and-price}
\label{subsec:63}
The preceding methodologies can be combined for a
new matheuristic: MILP-relaxed matheuristic (Truncated Column Generation). As described previously, this matheuristic consists in solving the pricing problem heuristically with RS, while branching is permitted to the master problem. From previous computational tests, the method combines the velocity of RS with the global accuracy of an ILP formulation. The method is especially useful when an instance must be solved with sufficient accuracy.

In the following computational tests, we compare the solution quality of the RS heuristic with the MILP-relaxed matheuristic. Therefore, we report results on three heuristic: the plain RS, the MILP-relaxed matheuristic in which RS is called only to solve the pricing problem (Matheuristic), and the situation in which RS is also called to initialize the master with a feasible solution and to solve the pricing (RS $+$ Matheur.). Table \ref{t:gapheur} reports the results of these three algorithms on instances of sizes from $n=20$ until $n=1000$.
This table shows the CPU time in seconds (CPU) and  the gap (\textit{GAP=100(`this-heuristic-solution' - `best-solution')/$\,|\,$`best-solution'$\,|\,$}) of these three algorithms with respect to the best known solution for each instance (note that for instances of sizes greater than 60 the comparison is with respect to the best solution found by one of our own heuristics). The reader may observe that we have imposed to each run a maximum execution time of one hour. Times reported for RS $+$ Matheur. are the aggregation of the time running RS plus the time running matheuristic.
Our intuition is confirmed by data: On average the best results are obtained by the combination of RS with  MILP-relaxed matheuristic for any instance size.

\begin{table}[htb]
\centering
\small
\begin{tabular}{|r|r|r|r|r|r|r|}
\hline
	& \multicolumn{2}{c|}{RS}	& \multicolumn{2}{c|}{Matheuristic}	& \multicolumn{2}{c|}{RS $+$ Matheur.} \\
\hline
n & GAP & CPU & GAP & CPU & GAP & CPU \\
\hline
20&	0.21 &0.00 &	0.39 &	0.03 & 0.00 & 0.02\\
30&	1.25 &0.03 &	2.05 & 0.08 &	0.48 & 0.10 \\
36&	3.28 & 0.05 & 2.19 & 0.23 &	2.33 & 0.18 \\
40	&  	8.02 & 0.07 &	3.18 & 0.27 &	1.68 & 0.32 \\
50	& 	7.17 & 0.15  &	5.50 & 1.05 & 3.18 & 0.83 \\
54	&  2.98 & 0.18 &	5.75 & 0.82 & 2.91 & 0.62 \\
60	&  4.38 & 0.27 &	7.15 & 1.71 & 3.85 & 1.40 \\
80& 0.30 & 0.75 &	4.80 & 7.75 & 0.00 & 4.81\\
100 & 0.16 & 1.65 & 8.16 & 27.66 & 0.00 & 11.50 \\
200 & 0.22 &26.80 & 6.63 & 353.24 & 0.02 & 227.35 \\
500 & 0.06 & 1302.10& 4.40 & 3600.00 & 0.05 & 4902.10\\
1000 & 0.38 & 3600.00 & 3.85 & 3600.00 & 0.22 & 7200.00 \\
\hline
\bf Total Result  &\bf	2.37 &\bf 411.0 &\bf	4.50 & \bf 632.73	& \bf 1.23 &\bf	1029.10
 \\
\hline
\end{tabular}
\caption{Cpu time and \% gap of different heuristics with respect to best known solution. \label{t:gapheur}}
\end{table}


\section{Conclusions and future work}\label{sec:7}

This paper analyzes the Graph-Connected Clique-Partitioning Problem (\GCPP) presenting three different new solution approaches: one exact and two heuristics. In \cite{BPR17} this problem was already introduced but its solution methods could only handle small-sized instances.
Our new approaches improve this drawback. We provide a new Integer Linear Programming (ILP) formulation, based on a set partitioning formulation, that approximates very-well the unknown optimal solution. This set partitioning formulation is  solved implementing a branch-and-price (B\&P) algorithm. The resulting pricing problem is  a new combinatorial problem: the Maximum-weighted Graph-Connected Single-Clique (\GCSC), that is analyzed and solved proposing different MILP formulations.

Besides enlarging the sizes of problems that can be solved exactly,  to tackle larger size problems we propose two new fast heuristics:  the ``random shrink'' (RS) and a MILP-relaxed matheuristic. These algorithms improve the previous VNS  and RR algorithms of \cite{BPR17} since they are both faster and more accurate. Extensive computational experiments show the usefulness of our new approaches giving rise to new opportunities to apply this classification methodology, that combines individual and relational data to new actual situations.

\section*{Acknowledgements}
This research has been partially supported by Spanish Ministry of Education and Science/FEDER grant number  MTM2016-74983-C02-(01-02),  projects FEDER-US-1256951, CEI-3-FQM331, P18-FR-1422,  FEDER-UCA18-106895  by Junta Andaluc\'{\i}a/FEDER/UCA, and Contrataci\'on de Personal Investigador Doctor. (Convocatoria 2019) 43 Contratos Capital Humano L\'inea 2. Paidi 2020, supported by the European Social Fund and Junta de Andaluc\'ia. The authors also acknowledge funding from project \textit{NetmeetData}: Ayudas Fundaci\'on BBVA a equipos de investigaci\'on cient\'ifica 2019.


\appendix
\section{Alternative formulations for the pricing problem}
\label{appendixA}
\subsection{Flow-based formulation with a auxiliary node\label{ss:flowcon}}

The rationale of this formulation is the same to the one described in Section \ref{ss:flowsin}, but
using an auxiliary node as a source node. Let $G_D = (V \cup \{0\},A)$ be a digraph, in which there is an auxiliary node $\{0\}$ and a set of arcs,  $A$, so defined: Two arcs  $(i,j)$ and $(j,i)$ for every edge $e_{ij} (=e_{ji})\in E$; and the auxiliary arcs $(0,i)$ for all $i \in V$. Flow variables $f_{ij}$ are defined for all pairs $i,j$ such that $(i,j) \in A$, the node 0 is assumed to be the flow source node, and it is also assumed a demand of one flow unit from all nodes of $V$. To define this formulation, we use  the same set of variables used for $\mathbf{(F_{flow})}$ but taking into account that now the arc set $A$ includes the arcs with origin at 0.
%
This alternative flow-based formulation of \GCSC  is:
\begin{eqnarray}
\mathbf{(F^0_{flow})} & \min&\displaystyle\sum_{i \in V} \sum_{j \in V: j>i}^nc_{ij}z_{ij}
-\displaystyle\sum_{i \in V} \gamma_i^* x_i\\
\nonumber
& s.t. & \eqref{f1:1}-\eqref{f1:3}, \eqref{z_dom}-\eqref{x_dom}, \\
\label{f1:4}
& &f_{ij}+f_{ji}\le (n-1) z_{ij}, \quad \forall (i,j)\in A: \, i,j \in V,\, i<j, \\
\label{f1:6}
& &  \sum_{i\in V} f_{0i}=\displaystyle \sum_{i \in V} x_i,\\
\label{f1:7}
& &  f_{0i}+\sum_{j\in V  : \, (j,i)\in A} f_{ji}-
                  \sum_{j\in V :  \, (i,j)\in A} f_{ij} =x_i, \quad  \forall i \in V,\\
\label{f1:8}
& &z_{0i}\le x_i, \quad \forall i\in V,\\
\label{f1:9}
& &f_{0i}\le n z_{0i}, \quad  \forall i\in V,\\
\label{f1:10}
& & \sum_{i\in V}z_{0i}\le1,\\
\label{f1:11}
& &z_{0i}\in \{0,1\}, \quad \forall i\in V. 
\end{eqnarray}

Constraints \eqref{f1:4} avoid the flow between nodes which are not included in the optimal cluster $S$. Constraints \eqref{f1:6}-\eqref{f1:7} are the node conservation flow, in which one unit of flow is retained by the crossed node. Constraints \eqref{f1:8}-\eqref{f1:10} ensure that the outgoing flow from the auxiliary node is sent to at most one node of the cluster (the flow upper bound is $n$). Lastly,
\eqref{f1:11}  define the domain of the variables.


\subsection{Arborescence formulation with an auxiliary node \label{ss:mtzcon}}

 Let $G_D = (V \cup \{0\},A)$ be a digraph defined as in Subsection \ref{ss:flowcon}. The rationale  behind this formulation is the one followed in Subsection \ref{ss:mtzsin}, but the MTZ 
description of the Spanning Tree builds an arborescence rooted at an auxiliary node $0$. Binary variables $t$, $x$ and $z$ are defined as in the formulation $\mathbf{(F_{MTZ})} $ but taking into account that now the arc set $A$ includes the arcs with origin at 0. Hence, this alternative formulation
 of the minimum \GCSC  problem is:
\begin{eqnarray}
\nonumber
\mathbf{(F^0_{MTZ})} &\min& \sum_{i\in V} \sum_{j\in V:i<j} c_{ij}z_{ij}
-\displaystyle\sum_{i \in V} \gamma_i^* x_i \\
\nonumber
&s.t.&  \eqref{f1:1}-\eqref{f1:3}, \eqref{z_dom},\eqref{x_dom}- \eqref{f3:2},\eqref{f3:7},\eqref{f3:10}, \\
\label{f3:3}
& &  t_{0j}+\sum_{i\in V: (i,j) \in A } t_{ij}=x_j, \quad \forall j \in V,\\
\label{f3:4}
& & \sum_{j \in V} t_{0j}=1. 
\end{eqnarray}

Constraints \eqref{f3:3} and \eqref{f3:4} ensure there is only one incident arc to every node of the cluster, so variables $t_{ij}$ define a directed subtree. 

Formulation $\mathbf{(F^0_{MTZ})}$ can be strengthened with the some families of valid inequalities
 described in Subsection \ref{ss:valid-MTZ0} in Appendix.

\section{Valid inequalities for the pricing problem formulations}

\subsection{Valid inequalities for $\mathbf{(F_{flow})}$ \label{ss:valid-flow}}

Formulation $\mathbf{(F_{flow})}$ can be strengthened with the following family of valid inequalities:
\begin{eqnarray}
\label{f2:2}
& &  \sum_{j \in V: (i,j) \in A} f_{ij}  \ge \sum_{k \in V: k\le i} x_k-x_i-n \sum_{k \in V: k>i} z_{ik}-n(1-x_i), \quad \forall i \in V,  \\
\label{f2:4}
& & f_{ij} + f_{ji} \le (n-1)z_{ij}, \quad \forall (i,j) \in A: i<j,      \\
\label{f2:5}
& &f_{ij} + f_{ji}  \le (n-2)z_{ij}+\sum_{k\in V: i<k} z_{ik}, \quad \forall (i,j) \in A: i<j.
\end{eqnarray}

Constraints \eqref{f2:2} guarantee that the outflow from the node with the highest index of the cluster is at least the number of elements of the cluster minus one. Constraints \eqref{f2:4} and \eqref{f2:5} provide upper bound of the flow crossing an edge (in both sense) being this $n-1$ for the node with the
highest index in the cluster, $n-2$ for the remaining nodes in the cluster and 0 if one of the two end-nodes of the edges is not in the cluster.

\subsection{Valid inequalities for $\mathbf{(F_{MTZ})}$ \label{ss:valid-MTZ}}
Formulation $\mathbf{(F_{MTZ})}$ can be strengthened with the following set of valid inequalities:
\begin{eqnarray}
\label{f4:4}
& &    \sum_{i \in V: (i,j) \in A}   t_{ij}\le x_j, \quad \forall j \in V, \\
\label{f4:5}
& &   \sum_{i \in V, (i,j) \in A}          t_{ij}\le \sum_{k\in V: k>j}^n z_{jk}, \quad
 \forall j\in V, \\
 \label{f4:6}
& &  \ell_j \le(n-1) \sum_{k \in V: (k,j) \in A} t_{kj}, \quad \forall j \in V,\\
 \label{f4:7}
& &  \ell_i \ge \sum_{k \in V: (k,i) \in A} t_{ki}, \quad \forall i \in V.
\end{eqnarray}
Constraints \eqref{f4:4} guarantee that there is at most an incident arc in the nodes of the cluster. Constraints \eqref{f4:5} ensure that the node with the greatest index (the root of the subtree) does not have incoming arcs. Constraints \eqref{f4:6}-\eqref{f4:7} impose bounds on the $\ell$-variables.

\subsection{Valid inequalities for $\mathbf{(F^0_{MTZ})}$ \label{ss:valid-MTZ0}}

Formulation $\mathbf{(F^0_{MTZ})}$ can be strengthened with the following family of valid inequalities:
\begin{eqnarray}
\label{f3:5}
& &  t_{0j}+ z_{ij}\le x_j,  \quad \forall (i,j) \in A: i<j, \\
\label{f3:8}
& & \ell_i \ge x_i-t_{0i}, \quad \forall i \in V, \\
\label{f3:9}
&&  \ell_j \le(1-t_{0j})(n-1), \quad \forall j \in V.
\end{eqnarray}

Constraints \eqref{f3:5} establish that the
fictitious node is connected with the node of the greatest index of $S$, therefore they break up symmetric optimal solutions.
Constraints \eqref{f3:8}-\eqref{f3:9} establish valid bounds for the $\ell$-variables.

\end{document}